\numberwithin{equation}{section}
\numberwithin{equation}{section}
\newtheorem{theorem}{Theorem}[section]
\newtheorem{proposition}[theorem]{Proposition}
\newtheorem{lemma}[theorem]{Lemma}
\newcommand{\R}{\mathbb{R}}
\DeclareMathOperator{\supp}{supp}
\renewcommand{\le}{\leqslant}
\renewcommand{\ge}{\geqslant}
\renewcommand{\l }{\lambda}
\newcommand{\n }{\nabla }
\renewcommand{\H}{H^1_0}
\newcommand{\into }{\int_{\Omega}}
\def\bbm[#1]{\mbox{\boldmath $#1$}}
\newcommand{\beq }{\begin{equation}}
\newcommand{\eeq }{\end{equation}}
 \renewcommand{\(}{\left(}
\renewcommand{\)}{\right)}
\newcommand{\e}{\varepsilon}
\title[Generalized SN system]{Generalized Schr\"odinger-Newton system\\ in dimension $N\ge 3$: critical case}
\author[A. Azzollini]{Antonio Azzollini}
\author[P. d'Avenia]{Pietro d'Avenia}
\author[G. Vaira]{Giusi Vaira}
\address[A. Azzollini]{\newline\indent
Dipartimento di Matematica, Informatica ed Economia \newline\indent
Universit\`a degli Studi della Basilicata\newline\indent
Via dell'Ateneo Lucano 10, 85100 Potenza, Italy}
\email{\href{mailto:antonio.azzollini@unibas.it}{antonio.azzollini@unibas.it}}
\address[P. d'Avenia]{\newline\indent
	Dipartimento di Meccanica, Matematica e Management
	\newline\indent
	Politecnico di Bari
	\newline\indent
	Via Orabona 4,  70125  Bari, Italy}
\email{\href{mailto:pietro.davenia@poliba.it}{pietro.davenia@poliba.it}}
\address[G. Vaira]{\newline\indent
	Dipartimento di Matematica ``G. Castelnuovo''
	\newline\indent
	Universit\`{a} degli Studi di Roma Sapienza
	\newline\indent
	Piazzale Aldo Moro 5, 00185 Roma, Italy}
\email{\href{mailto:giusi.vaira@uniroma1.it}{giusi.vaira@uniroma1.it}}
\thanks{A. Azzollini, P. d'Avenia, and G. Vaira are partially supported by a grant of the group GNAMPA of INdAM. A. Azzollini is also partially supported by Fondo R.I.L. 2015, Project ``Studio di equazioni differenziali alle derivate parziali nonlineari'', Universit\`a degli Studi della Basilicata. G. Vaira is also partially supported by MIUR-PRIN project-201274FYK7 005.}
\subjclass[2010]{35J20, 35J57, 35J60}
\keywords{Critical nonlinearity, Schr\"odinger-Newton system}
\begin{document}
\begin{abstract}
	In this paper we study a system which is equivalent to a nonlocal version of the well known Brezis Nirenberg problem. The difficulties related with the lack of compactness are here emphasized by the nonlocal nature of the critical nonlinear term.\\
	We prove existence and nonexistence results of positive solutions when $N=3$ and existence of solutions in both the resonance and the nonresonance case for higher dimensions.
\end{abstract}

\maketitle

\begin{center}
	\begin{minipage}{10.3cm}
		\small
		\tableofcontents
	\end{minipage}
\end{center}

\section{Introduction and statement of the main results}
In this paper we consider the following Schr\"odinger-Newton type system
\begin{equation}\label{pb}
\tag{$\mathcal{SN}$}
	\begin{cases}
	-\Delta u = \lambda u + |u|^{2^*-3}u\phi
	& \hbox{in } \Omega\\
	-\Delta\phi= |u|^{2^*-1}
	& \hbox{in } \Omega\\
	u=\phi=0
	&\hbox{on } \partial\Omega
	\end{cases}
\end{equation}
where $\Omega\subset \mathbb R^N$, $N\ge 3$, is a smooth and bounded domain and $2^*=\frac{2N}{N-2}$ is the critical Sobolev exponent. \\
By applying a standard reduction, it is easy to see that \eqref{pb} is equivalent to the following critical version of a Choquard type equation in a bounded domain, with Dirichlet boundary conditions
\begin{equation}\label{pb2}
	\begin{cases}
	-\Delta u = \lambda u + \left(\displaystyle\int_{\Omega}G(x-y)|u(y)|^{2^*-1}\,dy\right)|u|^{2^*-3}u
	& \hbox{in } \Omega,\\
	u=0
	&\hbox{on } \partial\Omega,
	\end{cases}
\end{equation}
where $G$ is the Green function of the Laplacian in the domain $\Omega$ with Dirichlet homogeneous boundary conditions (see e.g. \cite{Evans}). As emphasized in \cite{AzzDav}, where \eqref{pb} has been firstly introduced and positive solutions have been considered, the problem we are going to study is variational, and the particular choice of the powers in the first and the second equations brings as a consequence difficulties in questions related with compactness.\\
The analogy arising by a comparison with the classic problem of finding solutions to
  \begin{equation}\label{eq:BN}
	\begin{cases}
	-\Delta u = \lambda u + |u|^{2^*-2}u
	& \hbox{in } \Omega,\\
	u=0
	&\hbox{on } \partial\Omega,
	\end{cases}
\end{equation}
leads to interpret \eqref{pb2} as a nonlocal version of the well known Brezis-Nirenberg problem, suggesting approaches similar to those in \cite{BN} and in \cite{CFP,GR}.\\

As observed by Brezis and Nirenberg in \cite{BN}, the presence of a nonlinearity critically growing causes problems in obtaining compactness  for the Palais-Smale sequences unless they lie on a certain sublevel of the functional associated to the problem. As a consequence, it is quite clear that large values of $\lambda$ give a positive contribution in gaining compactness.\\
On the other hand, it was also observed that, if $\lambda$ is greater than the first eigenvalue of the Laplacian with zero Dirichlet condition on $\Omega$, we have no positive solution to \eqref{eq:BN}. The question related to the existence of sign-changing solutions of \eqref{eq:BN} for $\lambda \ge \lambda_1$ was handled and solved in dimension $N\ge 4$ in \cite{CFP}, and successively reconsidered in \cite{GR} in presence of a general nonlinearity $g(x,u)$ in the place of the linear term $\lambda u$.

In accordance with what is expected, the dimension of the space  where we set the problem and the position of $\lambda$ with respect to the eigenvalues in the spectrum of the Laplacian play a very important role also when we try to solve \eqref{pb}. In particular, while for $\lambda$ not belonging to the spectrum and $N\ge 4$ suitable estimates on the functional permit to achieve results similar to those already known for the Brezis-Nirenberg problem, the three-dimensional case  and the so called  ``resonance case'' are quite more delicate.\\

In order to present our results, let us first introduce some notations. Let $\lambda_k$'s, $k\in\mathbb N$, be the eigenvalues of $-\Delta$ with homogeneous Dirichlet boundary conditions on $\Omega$. It is well known that $$0<\lambda_1<\lambda_2<\ldots\ldots<\lambda_k<\ldots$$ with $\lambda_k \to+\infty$ as $k\to\infty$. We denote by $\sigma(-\Delta)$ the spectrum of $-\Delta$.
\\

We recall the following results whose proofs are essentially contained in \cite{AzzDav}.
	\begin{proposition}\label{pr:ad}
	Problem \eqref{pb} has no positive solution for $\lambda\ge \lambda_1$ and, if $\Omega$ is starshaped, no solution for $\lambda\le 0$.\\
	Moreover, if $N=3$ and $\Omega$ corresponds to the ball $B_R$ centered in 0 with radius $R>0$, the problem \eqref{pb} has at least a positive radial ground state solution for any $\lambda \in \big]\frac 3{10}\lambda_1,\lambda_1\big[$.
	\end{proposition}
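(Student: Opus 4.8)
The plan is to prove the three statements of Proposition~\ref{pr:ad} by separate arguments, all of them variational.

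\emph{Nonexistence for $\lambda\ge\lambda_1$.} Suppose $(u,\phi)$ is a positive solution of \eqref{pb}. Test the first equation with the positive first eigenfunction $e_1$ of $-\Delta$ (normalized so that $e_1>0$ in $\Omega$, $e_1=0$ on $\partial\Omega$). Integration by parts gives
\[
\lambda_1\into u\,e_1\,dx=\into(-\Delta u)\,e_1\,dx=\lambda\into u\,e_1\,dx+\into|u|^{2^*-3}u\,\phi\,e_1\,dx.
\]
Since the maximum principle applied to the second equation gives $\phi>0$ in $\Omega$, the last integral is strictly positive; hence $(\lambda_1-\lambda)\into u\,e_1\,dx>0$, which forces $\lambda<\lambda_1$. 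This is the clean elementary obstruction, entirely parallel to the Brezis--Nirenberg case.

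\emph{Nonexistence for $\lambda\le0$ on starshaped domains.} Here I would use a Pohozaev-type identity for the system \eqref{pb}. Multiplying the first equation by $x\cdot\nabla u$ and the second by $x\cdot\nabla\phi$, integrating over $\Omega$, and combining, one obtains an identity in which, because the exponents $2^*-3$, $2^*$, $2^*-1$ are tuned so that the nonlocal term is ``critical'', the interior bulk contributions coming from the nonlinearity cancel exactly against the scaling of the $\int|u|^{2^*-1}(-\Delta)^{-1}|u|^{2^*-1}$ term; what survives is a boundary term with a definite sign when $\Omega$ is starshaped, together with the term $\lambda\into u^2\,dx$ (or $\lambda$ times a quadratic form). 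For $\lambda\le0$ both surviving contributions have the same sign, so the identity can only hold if $u\equiv0$. The delicate point is bookkeeping the two Pohozaev identities simultaneously and checking that the critical balance of exponents is exactly what makes the nonlocal bulk term scale-invariant; this is where the specific choice of powers in \eqref{pb} is essential, and it is the main technical obstacle in this part.

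\emph{Existence of a positive radial ground state on $B_R$ in dimension $3$ for $\lambda\in\big]\tfrac{3}{10}\lambda_1,\lambda_1\big[$.} Working in $H^1_{0,\mathrm{rad}}(B_R)$, I would consider the energy functional associated to \eqref{pb2} (after the reduction), which has the mountain-pass geometry for $\lambda<\lambda_1$, and define the mountain-pass / ground-state level $c_\lambda$ on the Nehari manifold. By the concentration-compactness analysis for this nonlocal critical problem (as in \cite{AzzDav}), a Palais--Smale sequence at level $c_\lambda$ is relatively compact provided $c_\lambda$ lies strictly below a threshold $c^*$ determined by the ``energy of a bubble'' for the limiting problem in $\mathbb R^3$ (the Aubin--Talenti type profile for the nonlocal critical term). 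The heart of the proof is then the strict inequality $c_\lambda<c^*$: one inserts the standard truncated Talenti bubbles $U_\varepsilon$ concentrating at the center of the ball as test functions, expands the energy in $\varepsilon$, and shows that the linear term $-\lambda\int U_\varepsilon^2$ lowers the energy enough. In dimension $N=3$ the bubble is only barely in $L^2$, so $\int U_\varepsilon^2\sim\varepsilon$ (not $\varepsilon^2|\log\varepsilon|$ or $\varepsilon^2$ as in higher dimensions), and the competing error terms from the cut-off and from the nonlocal convolution are of the same order $\varepsilon$; carrying out the expansion carefully produces a coefficient which is negative precisely when $\lambda>\tfrac{3}{10}\lambda_1$, the constant $\tfrac{3}{10}$ coming out of the explicit ratio of the relevant integrals of the $\mathbb R^3$ bubble against the first-eigenfunction/Green-function data on the ball. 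This delicate asymptotic expansion, and the extraction of the sharp constant $\tfrac{3}{10}$, is the main obstacle; once $c_\lambda<c^*$ is established, compactness gives a nontrivial critical point, and the maximum principle (applied first to $\phi$ via the second equation, then to $u$) together with the radial symmetry of the problem upgrades it to a positive radial ground state.
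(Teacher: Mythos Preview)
The paper does not give its own proof of this proposition: it simply states that the proofs ``are essentially contained in \cite{AzzDav}'' and moves on. So there is no argument in the paper to compare your sketch against line by line. That said, your three arguments are all the standard ones, and they match both what one expects to find in \cite{AzzDav} and, for the existence part, what the present paper carries out in detail (with sharper constants) in Section~\ref{sb31} for Theorem~\ref{Th1}\eqref{i12}: mountain-pass geometry, the compactness threshold $\frac{2}{N+2}S^{N/2}=\frac{2}{5}S^{3/2}$ (Lemma~\ref{PS}), and the Brezis--Nirenberg test functions $u_\e(r)=\varphi(r)/(\e+r^2)^{1/2}$ to push the level below the threshold. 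One minor difference: you work directly in $H^1_{0,\mathrm{rad}}$, whereas the paper (for Theorem~\ref{Th1}) works in the full $H^1_0(B_1)$, obtains a nonnegative mountain-pass critical point, and then invokes the moving-plane Lemma~\ref{le:GNN} to get radial symmetry a posteriori; both routes are legitimate. Your description of where the constant $\tfrac{3}{10}$ comes from is a little vague, but the mechanism you identify---the $N=3$ bubble having $\int U_\e^2\sim\e$, of the same order as the cut-off errors, so that an explicit coefficient must be checked---is exactly right.
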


We are able to improve results in \cite{AzzDav} as follows

\begin{theorem}\label{Th1}
Assume $N=3$ and $\Omega$ corresponding to the ball $B_R$ centered in 0 with radius $R>0$. Then
\begin{enumerate}[label=(\roman*),ref=\roman*]
\item\label{i12} problem \eqref{pb} has a positive radial ground state solution for any $\lambda\in\, \big] \left(\frac 1 4 + \frac{2}{5\pi^2}\right)\lambda_1,\lambda_1\big[$;
\item\label{ii12} problem \eqref{pb} has no positive solution for $\lambda\in ]0,\lambda^* ]$, where $\lambda^*\in \big]\frac{\lambda_1}{16},\frac{9}{64}\lambda_1\big[$ is explicitly determined as a solution of a suitable equation.
\end{enumerate}
\end{theorem}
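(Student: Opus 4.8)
The plan is to treat the two parts separately, using the reduced functional associated to \eqref{pb2}. Fix $\Omega=B_R$ and work in the space $H^1_{0,r}(B_R)$ of radial functions, with the energy functional
\[
I_\lambda(u)=\frac12\irn\left(|\nabla u|^2-\lambda u^2\right)-\frac{1}{2\cdot 2^*}\int_{B_R}\int_{B_R}G(x-y)|u(x)|^{2^*-1}|u(y)|^{2^*-1}\,dx\,dy,
\]
whose positive critical points give positive solutions of \eqref{pb}. The ground state level is $c_\lambda=\inf_{\mathcal N_\lambda}I_\lambda$ over the natural Nehari-type manifold, and one checks (as in \cite{AzzDav}) that $I_\lambda$ has the mountain-pass geometry for $\lambda<\lambda_1$.

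For part \eqref{i12}, the standard strategy of Brezis--Nirenberg applies: a Palais--Smale sequence at level $c_\lambda$ is compact provided $c_\lambda$ lies strictly below the first threshold at which the nonlocal critical term causes loss of compactness — i.e.\ below the mountain-pass level of the ``limit problem'' in $\mathbb R^N$ with $G$ replaced by the Newtonian kernel $c_N|x-y|^{2-N}$. Call this threshold $c_\infty$. So the heart of the matter is a strict inequality $c_\lambda<c_\infty$, which one obtains by testing $I_\lambda$ on a truncated, rescaled family built from the extremal functions of the corresponding nonlocal Sobolev inequality (the analogue of the Aubin--Talenti bubbles), centered at $0$ because of radial symmetry. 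Expanding the energy of this test family in the concentration parameter $\mu\to0$, the leading correction coming from the linear term $-\lambda u^2$ is, in dimension $N=3$, of the same order as the error from truncating the bubble; balancing these two contributions yields a threshold $\bar\lambda\lambda_1$ below which the strict inequality fails and above which it holds. The constant $\frac14+\frac{2}{5\pi^2}$ is precisely the value of $\bar\lambda$ produced by this computation (in dimension $3$ the exponents are $2^*=6$, $2^*-1=5$, $2^*-3=3$, and the Green function of the ball is explicit, which makes all the integrals computable in closed form). The main obstacle here is the genuinely nonlocal nature of the critical term: the energy expansion requires estimating double integrals of the truncated bubble against $G$ — equivalently, controlling the Newtonian potential of $|u_\mu|^{2^*-1}$ and the boundary error caused by replacing $G$ by the free kernel — and keeping track of every lower-order term to make the balance of orders rigorous.

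For part \eqref{ii12}, the nonexistence of positive solutions for small $\lambda>0$ is proved by a Pohozaev-type argument on the ball. Suppose $u>0$ solves \eqref{pb} with $\phi>0$; multiply the first equation by $x\cdot\nabla u$ and the second by $x\cdot\nabla\phi$, integrate over $B_R$, and combine the resulting identities. Because the nonlinearity is exactly critical, the bulk ``dilation'' terms from the two equations are designed to cancel against each other up to a boundary contribution, so one is left with an identity relating $\lambda\int_{B_R}u^2$, a positive boundary integral involving $(\partial_\nu u)^2$ and $(\partial_\nu\phi)^2$ on $\partial B_R$, and lower-order volume terms. Estimating the boundary term from below — using that $u$ and $\phi$ are superharmonic-type on the ball, so one can bound $\partial_\nu u$ on $\partial B_R$ below in terms of $\|u\|$ via a Hopf-lemma/comparison argument — and combining with the Poincaré inequality $\int|\nabla u|^2\ge\lambda_1\int u^2$, one obtains $\lambda\ge\lambda^*$ for an explicit $\lambda^*$ which is the root of the algebraic equation produced by optimizing these estimates. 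The claimed bounds $\lambda^*\in\big]\lambda_1/16,\ \tfrac{9}{64}\lambda_1\big[$ then follow by plugging in the explicit constants for $B_R$. The delicate point is getting a sharp enough lower bound on the boundary flux terms: a crude bound only gives nonexistence for $\lambda$ near $0$, whereas the stated interval requires the optimized comparison estimate, and one must also rule out the borderline case $\lambda=\lambda^*$ itself.
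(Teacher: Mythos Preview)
Your plan for part~(\ref{ii12}) has a genuine gap. The standard Pohozaev identity obtained from the multipliers $x\cdot\nabla u$ and $x\cdot\nabla\phi$ yields, after the critical cancellations, an identity of the type
\[
\lambda\int_{B_R}u^2 = C_1\int_{\partial B_R}|\partial_\nu u|^2\,dS + C_2\int_{\partial B_R}|\partial_\nu\phi|^2\,dS,
\]
and you propose to bound the right-hand side from below by a multiple of $\|u\|^2$ via a Hopf/comparison estimate, then invoke Poincar\'e. But no such quantitative lower bound on the boundary flux in terms of $\|u\|$ is available for general positive super\-solutions: the Hopf lemma gives only a pointwise sign, and a solution that concentrates in the interior can have large $\|\nabla u\|$ while $\int_{\partial B_R}|\partial_\nu u|^2$ stays bounded. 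This is exactly why Brezis and Nirenberg, for the local problem on the $3$-ball, do \emph{not} use the $x\cdot\nabla u$ multiplier but instead a one-variable test function $\psi(r)$ with $\psi(0)=0$: multiplying the radial ODEs by $r^2\psi u'$, $r^2\psi\phi'$, and companion multipliers produces an identity whose sign can be forced by choosing $\psi$ appropriately (first $\psi=\sin(2\sqrt{\lambda}\,r)$, then a refined choice for $\lambda>\lambda_1/16$), and $\lambda^*$ is the largest value for which such a $\psi$ exists. For this to make sense one must first know that any positive solution is radial, which requires a separate moving-plane argument for the \emph{system}; your outline omits this step entirely.

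For part~(\ref{i12}) your outline is broadly correct, but the route you describe---estimating the double integral against the Green function by comparing $G$ with the free Newtonian kernel---is not how the paper gets the explicit constant. The paper computes $\|\phi_{u_\e}\|^2$ directly from the radial ODE $-(r^2\phi_{u_\e}')'=r^2 u_\e^5$, integrates to obtain $\phi_{u_\e}'(r)$ in closed form, and expands. The constant $\tfrac14+\tfrac{2}{5\pi^2}$ comes from the specific cutoff $\varphi(r)=\cos(\pi r/2)$ in $u_\e(r)=\varphi(r)/(\e+r^2)^{1/2}$; a generic cutoff gives only the previously known bound $\tfrac{3}{10}\lambda_1$. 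Also, the coefficient in your functional should be $\tfrac{1}{2(2^*-1)}$, not $\tfrac{1}{2\cdot 2^*}$.
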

We specify that when we say {\it positive solution} we mean a couple $(u,\phi)$ where both the functions are positive in $\Omega$.

The existence result has been obtained by means of refined estimates on the explicit expression of the solution $\phi$ of the second equation, when $u$ in chosen inside the well known one parameter family of functions introduced by Brezis and Nirenberg in \cite{BN}. We recall that the components of such a family, obtained cutting off the solutions of the critical problem \eqref{eq:BN} set in $\Omega=\R^N$, are used by Brezis and Nirenberg as test functions to prove that there exists at least a Palais-Smale sequence lying in the compactness sublevel.\\
As regards the nonexistence result, we are going to use an argument exploited in \cite{BN} for \eqref{eq:BN}, and based on the a priori information about the radial symmetry of any positive solution of \eqref{eq:BN}. To this end, we preliminarily prove an analogous result on the symmetry of all positive solutions of \eqref{pb} by showing that, since the general assumptions of the maximum principle hold, the moving plane method applies in a quite natural way to the solutions of our system.\\

Higher dimensions permit to obtain more convenient estimates by which not only we enlarge the range of existence for what concerns positive solutions, but we are also able to study the presence of sign changing solutions for $\lambda\ge\lambda_1$. The situation changes considerably depending wheter  $\lambda$ is in the spectrum or not.

We have the following two results.

\begin{theorem}\label{Th2}
	Assume $N\ge 4$. Then problem \eqref{pb} has
	\begin{enumerate}[label=(\roman*),ref=\roman*]
		\item\label{i13}  a positive ground state solution for any $\lambda\in ]0, \lambda_1[$;
		\item\label{ii13} a sign changing solution for every $\lambda\in ]\lambda_k,\lambda_{k+1}[$.
	\end{enumerate}
\end{theorem}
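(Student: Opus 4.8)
The plan is to follow the by-now classical Brezis–Nirenberg strategy, adapted to the nonlocal critical term, treating the two items separately. For item \eqref{i13}, I would work with the energy functional associated to \eqref{pb2},
\[
E_\lambda(u)=\frac12\irn |\n u|^2 - \frac\lambda2\into u^2 - \frac{1}{2\cdot 2^*}\into\into G(x-y)|u(x)|^{2^*-1}|u(y)|^{2^*-1}\,dx\,dy,
\]
on $\H(\Omega)$, and aim for a mountain-pass / ground-state level. First I would show that $E_\lambda$ has the mountain-pass geometry for $\lambda\in\,]0,\lambda_1[$ (using $\lambda<\lambda_1$ for the local linear coercivity near $0$, and noting the quartic-type nonlocal term dominates for large $u$). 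The crucial point is the compactness threshold: I expect that Palais–Smale sequences of $E_\lambda$ are relatively compact below a level of the form $c^* = \frac{1}{2N}\,\S_{\mathrm{nl}}^{\,?}$, where $\S_{\mathrm{nl}}$ is the best constant in the nonlocal Sobolev-type inequality $\(\into\into G(x-y)|u|^{2^*-1}|u|^{2^*-1}\)^{1/(2\cdot2^*)}\le C\|\n u\|_2$ — i.e. a Brezis–Lieb / concentration-compactness argument showing that below this level no mass escapes to concentration. Then I would estimate the mountain-pass level from above using the Brezis–Nirenberg truncated instantons $U_\e$: the key inequality $E_\lambda(t U_\e) < c^*$ for $\e$ small, which holds precisely when the correction produced by the term $-\tfrac\lambda2\|U_\e\|_2^2$ beats the error in the nonlocal term; here the gain is of order $\e^2$ (up to logs) while, for $N\ge 4$, the loss in the nonlocal critical term is of lower order, so the strict inequality holds for every $\lambda>0$. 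Positivity of $u$ follows by working with $|u|$ (the functional only sees $|u|$), and then positivity of $\phi$ is immediate from $-\Delta\phi=|u|^{2^*-1}\ge0$ with zero boundary data and the maximum principle. Finally, ground state is obtained by minimizing $E_\lambda$ over the Nehari manifold and checking the minimizing level coincides with the mountain-pass level.

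For item \eqref{ii13}, the aim is a sign-changing (nodal) solution for $\lambda\in\,]\lambda_k,\lambda_{k+1}[$. Here the functional $E_\lambda$ is no longer bounded below near zero on the low modes, so I would use a linking-type argument over the decomposition $\H(\Omega)=V_k\oplus V_k^\perp$, where $V_k=\Span\{e_1,\dots,e_k\}$ is the span of the first $k$ eigenfunctions: $E_\lambda$ is negative on $V_k$ and has a positive "saddle" geometry on $V_k^\perp$ near the origin (using $\lambda<\lambda_{k+1}$). The linking theorem then produces a Palais–Smale sequence at a min-max level $c_k$, and the same compactness analysis as above gives a nontrivial critical point $u$, provided $c_k < c^*$. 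The upper estimate $c_k<c^*$ is again obtained by a Brezis–Nirenberg–type computation, now placing the concentrating instanton $U_\e$ "transversally" to $V_k$ (i.e. taking the linking sphere in $V_k\oplus\R U_\e$), and exploiting that in dimension $N\ge 4$ the interaction between $U_\e$ and the fixed finite-dimensional space $V_k$ produces only lower-order corrections compared to the $\e$-gain from the eigenvalue term. The solution $u$ thus obtained is nontrivial and, because $c_k$ sits strictly above the least energy level attainable on $V_k\setminus\{0\}$ and strictly below $c^*$, one checks $u$ cannot be of fixed sign (it cannot be a multiple of $e_1$, nor can it be a one-signed solution, by comparing energies / by a standard argument that one-signed solutions have energy $\ge$ the ground-state-type level which here exceeds $c_k$ would be false — instead the cleanest route is: if $u\ge0$ then, testing the equation against the negative eigenfunctions, one derives a contradiction with $\lambda>\lambda_k$, as in \cite{CFP,GR}). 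Then $\phi$ solves $-\Delta\phi=|u|^{2^*-1}$, so $\phi>0$ while $u$ changes sign.

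I expect the main obstacle to be the compactness analysis of Palais–Smale sequences for the nonlocal critical term, specifically identifying the correct threshold $c^*$ and proving the Brezis–Lieb-type splitting for the double integral $\into\into G(x-y)|u_n|^{2^*-1}|u_n|^{2^*-1}$ along a concentrating sequence. Unlike the local Brezis–Nirenberg problem, one must handle the Green kernel $G(x-y)$ (which behaves like the Riesz kernel $|x-y|^{-(N-2)}$ near the diagonal plus a smooth remainder), control cross terms between the weakly convergent part and the concentrating bubbles, and verify that the "bubble energy" is exactly $c^*=\frac1{2N}\S_{\mathrm{nl}}^{?}$ with the profile being the Riesz-convolution ground state on $\RN$ (the Choquard–critical Aubin–Talenti type extremal). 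Once this threshold is pinned down and the upper estimates $E_\lambda(tU_\e)<c^*$ (resp. $c_k<c^*$) are established via the $\e$-expansions — where $N\ge 4$ is used exactly as in \cite{BN,CFP} to guarantee the linear-term gain dominates — both items follow from standard critical-point theory.
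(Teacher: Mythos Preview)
Your outline for \eqref{i13} is essentially the paper's approach, but you leave the compactness threshold undetermined, and this is where the paper's argument is sharper than you expect. The threshold is \emph{not} a new nonlocal Sobolev constant: it is $\frac{2}{N+2}S^{N/2}$ with the ordinary $S$. The paper works with the reduced functional $I(u)=\frac12\|u\|^2-\frac\lambda2|u|_2^2-\frac1{2(2^*-1)}\|\phi_u\|^2$ (your coefficient $\frac{1}{2\cdot 2^*}$ is wrong), and the key observation is that testing $-\Delta\phi_u=|u|^{2^*-1}$ against $|u|$ gives $\|\phi_u\|^2\ge 2|u|_{2^*}^{2^*}-\|u\|^2$, whence
\[
I(u)\le \frac{N}{N+2}\|u\|^2-\frac\lambda2|u|_2^2-\frac{N-2}{N+2}|u|_{2^*}^{2^*}.
\]
This reduces the upper estimate on the mountain-pass level to the \emph{local} Brezis--Nirenberg computation for $N\ge4$, with no need to analyze a Choquard-type extremal on $\RN$. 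The Palais--Smale analysis below $\frac{2}{N+2}S^{N/2}$ uses only $\|\phi_u\|\le S^{-2^*/2}\|u\|^{2^*-1}$ and a Brezis--Lieb splitting for $\phi_{u_n}$, so the ``main obstacle'' you anticipate largely evaporates once you pass to the $\phi_u$ formulation.

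For \eqref{ii13} your linking scheme over $V_k=\Span\{e_1,\dots,e_k\}$ is the right geometry, but the paper does \emph{not} place the bubble transversally to $V_k$ and then estimate interaction terms. Instead it uses the Gazzola--Ruf device: replace each $e_j$ by $e_j^m=\zeta_m e_j$, where $\zeta_m$ vanishes on $B_{1/m}$, set $H_m^-=\Span\{e_j^m\}$, and support the cut-off bubble $u_\e$ inside $B_{1/m}$. Then $\supp(u_\e)\cap\supp(w)=\emptyset$ for every $w\in H_m^-$, so $\phi_{w+tu_\e}=\phi_w+\phi_{tu_\e}$ exactly, and one gets the clean splitting
\[
I(w+tu_\e)=I(w)+I(tu_\e)-\tfrac{1}{2^*-1}\into\phi_w|tu_\e|^{2^*-1}\le I(w)+I(tu_\e).
\]
This is precisely the mechanism that kills the nonlocal cross terms you flag as problematic; without it your ``lower-order corrections'' claim would require a genuine computation you have not supplied. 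With the splitting in hand, $I(w)\le 0$ on $H_m^-$ when $\lambda>\lambda_k$ (for $m$ large), and $I(tu_\e)<\frac{2}{N+2}S^{N/2}$ by the same local-type estimate as above, giving the linking level below threshold. Finally, that the solution is sign-changing is immediate from Proposition~\ref{pr:ad}: for $\lambda\ge\lambda_1$ there is no positive solution, so any nontrivial critical point must change sign --- your eigenfunction-testing argument is unnecessary here.
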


\begin{theorem}\label{Th3}
	Assume $N\ge 6$ and $\lambda \in \sigma(-\Delta)$. Then \eqref{pb} possesses a sign changing solution.
\end{theorem}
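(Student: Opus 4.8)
The plan is to set up the variational problem in a linking framework adapted to the resonant case $\lambda\in\sigma(-\Delta)$, and to produce a sign changing critical point by combining a linking geometry with a careful analysis of the compactness sublevel. Working with the reduced formulation \eqref{pb2}, I would consider the functional
\[
I_\lambda(u)=\frac12\irn|\n u|^2-\frac\lambda2\irn u^2-\frac1{2\cdot 2^*}\into\into G(x-y)|u(x)|^{2^*-1}|u(y)|^{2^*-1}\,dx\,dy
\]
on $\H(\O)$. Since $\lambda=\lambda_k$ for some $k$, I split $\H=V\oplus W$ where $V=\Span\{e_1,\dots,e_k\}$ is the sum of eigenspaces with $\lambda_j\le\lambda_k$ and $W=V^\perp$, so that $I_\lambda$ is nonpositive on $V$ (a finite-dimensional piece, degenerate along $e_k$) and, on $W$, bounded below near $0$ by a positive quadratic form since $\lambda<\lambda_{k+1}$ there. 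The first step is to establish the linking geometry: choose a Brezis--Nirenberg type truncated instanton $U_\e$ concentrated at an interior point, set the linking sphere/disk as $\de Q$ with $Q=(B_\rho\cap (V\oplus \R^+ U_\e))$ and the linking surface $S=\de B_r\cap W$, and verify the usual max over $\de Q$ $<$ inf over $S$ separation after estimating the mountain-pass/linking level.

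The heart of the argument is the energy estimate showing that the linking minimax level $c_\lambda$ lies strictly below the compactness threshold. Here the nonlocal critical term behaves, after the reduction, like a power $2^*$ integrated against the Green kernel; the relevant threshold is governed by the best constant for the map $u\mapsto \left(\into\into G(x-y)|u(x)|^{2^*-1}|u(y)|^{2^*-1}\right)^{1/2}$ versus $\|\n u\|_2$, and the critical level is of the form $\frac{2^*-1}{2\cdot 2^*}\,\mathcal S_{NL}^{\,2^*/(2^*-1)}$ (the precise constant is what one computes from the limiting Choquard-type problem on $\R^N$ whose extremals are, up to scaling, the inverse of $1+|x|^2$ raised to the appropriate power). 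One plugs $v=w+tU_\e$, $w\in V$, into $I_\lambda$, expands, and uses the standard asymptotics $\|\n U_\e\|_2^2=S^{N/2}+O(\e^{N-2})$, together with the expansion of the double Green-kernel integral of $U_\e$. The gain over the threshold comes from a term of order $\e^{2}$ (times $\irn U_\e^2$-type quantities), and the dangerous negative correction from the lower-order linear term $-\frac\lambda2 u^2$ on the $V$-component is absorbed because $\lambda$ is fixed below $\lambda_{k+1}$; this is exactly the place where $N\ge 6$ enters, since for $N\ge 6$ the "good" term $\e^2$ dominates the remainder $\irn U_\e^2\sim\e^2$ coming from the resonant direction $e_k$ (for $N=5$ one would get $\e^2|\log\e|$ or $\e^{3}$ mismatches, and for $N=4$ a genuine logarithmic obstruction). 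I expect this comparison — controlling the interaction of the concentrating bubble with the finite-dimensional resonant space inside the nonlocal quadratic form — to be the main obstacle, and the restriction $N\ge 6$ to be dictated precisely by it.

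With the level $c_\lambda$ below the threshold, one invokes a linking theorem (Rabinowitz type, or the deformation lemma together with the $(PS)_{c_\lambda}$ condition holding below the threshold, which follows from the concentration-compactness analysis of the nonlocal term exactly as for the local Brezis--Nirenberg problem) to obtain a nontrivial critical point $u$ at level $c_\lambda>0$. Finally I argue $u$ is sign changing: because $c_\lambda$ exceeds the mountain-pass level of $I_\lambda$ restricted to the positive (and negative) cones — or more directly because $u$ has a nontrivial component in $V$ which already contains sign changing eigenfunctions and the minimax is built over a set intersecting both half-spaces — a solution of one sign would have energy not matching $c_\lambda$; the clean way is to show $c_\lambda$ is strictly larger than the least energy of positive solutions (bounded above by the Brezis--Nirenberg-type value for \eqref{pb}) plus the analogous negative one cannot occur, so $u$ changes sign. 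Recovering $\phi$ from $u$ via $-\Delta\phi=|u|^{2^*-1}$ gives the pair $(u,\phi)$ solving \eqref{pb}, completing the proof.
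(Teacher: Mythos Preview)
Your overall architecture (linking over $V\oplus\R^+\{U_\e\}$ against $\partial B_r\cap W$, then pushing the minimax level below the compactness threshold) matches the paper's, but the proposal has a real gap at the point you yourself flag as ``the main obstacle'': you never say how to control the cross-interaction between $w\in V$ and the bubble $U_\e$ inside the nonlocal term $\|\phi_{w+tU_\e}\|^2$. The paper's device for this is essential and you are missing it: one replaces the eigenfunctions $e_j$ by truncated versions $e_j^m=\zeta_m e_j$ vanishing on $B_{1/m}$, and places the bubble inside $B_{1/m}$. Then $w$ and $u_\e$ have disjoint supports, so $\phi_{w+tu_\e}=\phi_w+\phi_{tu_\e}$ exactly, and Lemma~\ref{le:almsplit} gives the clean splitting $I(w+tu_\e)\le I(w)+I(tu_\e)$. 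Without this, the nonlocal cross-terms do not decouple and your expansion ``plug $v=w+tU_\e$ and expand'' is not justified.

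Your account of why $N\ge 6$ is needed is also off. The restriction does not come from comparing two quantities both of size $\e^2$; it comes from the resonance-specific fact that on the approximated space $H_m^-$ one no longer has $I(w)\le 0$ but only $\sup_{H_m^-} I = O(m^{-(N^2-4)/4})$ (this uses the finite-dimensional lower bound $\|\phi_w\|\ge C\|w\|^{2^*-1}$, property~(\ref{viii}) of Lemma~\ref{Lemmaphi}). One must then couple $\e=\e(m)$ and balance this error against the gain $-C\e^2$ from $-\tfrac{\lambda}{2}|t_\e u_\e|_2^2$ and the loss $O((\e m)^{N-2})$ in the gradient; with $\e(m)=m^{-(N-2)/(N-4)}\log^{-1/2}m$ the comparison reduces to the sign of $N^2-2N-16$, which is positive exactly for $N\ge 6$. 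Finally, the sign-changing conclusion is much simpler than you suggest: since $\lambda\ge\lambda_1$, Proposition~\ref{pr:ad} rules out positive solutions, so any nontrivial critical point must change sign. (Minor point: your functional has the wrong constant in front of the nonlocal term --- it should be $\tfrac{1}{2(2^*-1)}$, not $\tfrac{1}{2\cdot 2^*}$.)
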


Comparing these results with those in \cite{GR}, we observe the loss of the dimension $N=5$ when we are in the  resonance case. The differences between the resonance and the nonresonance case, already pointed out by Gazzola and Ruf for the Brezis Nirenberg problem, here are emphasized by the behaviour of the nonlocal nonlinear term when we compute the functional on finite dimensional subspaces of $\H(\Omega)$. In particular, it is quite interesting to observe that, differently from what happens for the critical term in the functional associated to the problem \eqref{eq:BN}, in finite dimensional subspaces the integral of the nonlocal term does not have the growth of the norm to the critical power, but it behaves as the norm to the power $2(2^*-1)$.

The paper is so organized: in Section \ref{preliminari}, after having recalled the reduction method which classically applies to this kind of problems, we present some useful properties related with the reduced functional, and in particular we study the compactness of its Palais-Smale sequences. \\
In Section \ref{N3} we are interested in positive solutions when $N=3$ and we prove the existence and the nonexistence results contained in Theorem \ref{Th1}.\\
Finally in Section \ref{N4} we consider the case $N\ge 4$  looking for positive and sign changing solutions. In particular for the existence of the latter we take advantage of the Linking Theorem in \cite{Rab} which we apply both in the resonance and in the nonresonance case in order to prove Theorem \ref{Th2} and Theorem \ref{Th3}.
\\\\
{\bf{ Notations:}} In what follows we let $H^1_0(\Omega)$ the usual Sobolev space equipped with the norm $\|u\| :=(\int_\Omega |\nabla u|^2\, dx)^\frac{1}{2}$ and, for any $u\in L^q(\Omega)$ we let $|u|_q:=\left(\int_\Omega |u|^q\, dx\right)^{\frac 1 q}$. Moreover, with $C,C_i$ we denote positive constants that can vary also from line to line and  by $B_r$ the ball in $\R^N$ centered at zero with radius $r$.

\section{Preliminaries}\label{preliminari}
\subsection{The reduction method}
As it is classical in the study of this type of systems, for every $u\in H^1_0(\Omega)$ there exists a unique $\phi_u\in H^1_0(\Omega)$ that solves the second equation of \eqref{pb}. Hence we can reduce the system \eqref{pb} to the boundary value problem
\begin{equation}\label{pbrid}
\begin{cases}
-\Delta u = \lambda u + |u|^{2^*-3}u\phi_u
& \hbox{in } \Omega,\\
u=0
&\hbox{on } \partial\Omega.
\end{cases}
\end{equation}
In fact it can be easily proved that $(u,\phi)\in H^1_0(\Omega)\times H^1_0(\Omega)$ is a solution of \eqref{pb} if and only if $u$ solves \eqref{pbrid} and $\phi=\phi_u$.\\
Moreover, solutions of \eqref{pbrid} can be found as critical points of the $C^1$ one-variable functional  $I:H^1_0(\Omega)\to \mathbb R$
\begin{equation*}
\begin{split}
I(u)
&=
\frac 12 \int_\Omega|\nabla u|^2\, dx -\frac{\lambda}{2}\int_\Omega |u|^2\, dx -\frac{1}{2(2^*-1)}\int_\Omega |\nabla\phi_u|^2\, dx\\
&=
\frac 12 \int_\Omega|\nabla u|^2\, dx -\frac{\lambda}{2}\int_\Omega |u|^2\, dx -\frac{1}{2(2^*-1)}\int_\Omega \phi_u |u|^{2^*-1}\, dx
\end{split}
\end{equation*}
since for all $u,v\in H^1_0(\Omega)$ we have
\[
I'(u)[v]
=\int_\Omega \nabla u \nabla v \, dx
- \lambda \int_\Omega u  v \, dx
- \int_\Omega |u|^{2^*-3}u\phi_u v \, dx.
\]

In the next lemma we summarize the properties of such a function $\phi_u$ that will be useful in the following.
\begin{lemma}\label{Lemmaphi}
For every fixed $u\in H^1_0(\Omega)$ we have:
\begin{enumerate}[label=(\roman*),ref=\roman*]
\item \label{pos} $\phi_u \ge 0$ a.e. in $\Omega$;
\item \label{rescaling} for all $t>0$, $\phi_{tu}=t^{2^*-1}\phi_u$;
\item \label{phiuu2*-1} $\|\phi_u\|\le S^{-\frac{2^*}{2}}  \|u\|^{2^*-1}$, where $S=\inf_{v\in H^1(\R^N)\setminus\{0\}}\|\n v\|_2^2/\|v\|^2_{2^*}$;
\item \label{13} $\|\phi_u\|^2\ge 2\delta |u|_{2^*}^{2^*} - \delta^2\|u\|^2$ for any $\delta>0$.
\end{enumerate}
Moreover
\begin{enumerate}[label=(\roman*),ref=\roman*]
\setcounter{enumi}{4}
\item \label{iv} for every $u, v \in H^1_0(\Omega)$,
$$\int_\Omega \phi_u |v|^{2^*-1}\, dx = \int_\Omega \phi_v |u|^{2^*-1}\, dx;$$
\item \label{vi} for every $u, u_1,\ldots,u_k\in H^1_0(\Omega)$,
$$
\left|\phi_u-\sum_{i=1}^{k}\phi_{u_i}\right|_{2^*}
\le
\frac{1}{S}\left||u|^{2^*-1}-\sum_{i=1}^{k}|u_i|^{2^*-1}\right|_{\frac{2^*}{2^*-1}};$$
\item \label{vii} if $(u_n)$ in $H^1_0(\Omega)$ and $u\in\H(\Omega)$ are such that $u_n \rightharpoonup u$  in $H^1_0(\Omega)$, then, up to subsequences, $\phi_{u_n}\rightharpoonup \phi_u$ in $H^1_0(\Omega)$ and strongly in $L^p(\Omega)$ for all $p\in[1,2^*)$. Moreover
\begin{equation}\label{starstarstarp}
\int_\Omega  \phi_{u_n} |u_n|^{2^*-1}\, dx-\int_\Omega \phi_{u_n-u}|u_n-u|^{2^*-1}\, dx = \int_\Omega \phi_u |u|^{2^*-1}\, dx+ o_n(1);
\end{equation}
\item \label{viii} if $W\subset\H(\Omega)$ is a finite dimensional subspace, then there exists $C=C(W)>0$ such that for any $w\in W$ we have
	\begin{equation*}
		C^{-1} \|w\|^{2^*-1}\le \|\phi_w\| \le C  \|w\|^{2^*-1}.
	\end{equation*}
\end{enumerate}
\end{lemma}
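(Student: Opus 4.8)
The plan is to first set up a little machinery attached to the linear operator $f\mapsto\Phi[f]\in\H(\Omega)$ that sends $f\in L^{\frac{2^*}{2^*-1}}(\Omega)$ to the unique solution of $-\Delta\Phi=f$ in $\Omega$, $\Phi=0$ on $\partial\Omega$; thus $\phi_u=\Phi[\,|u|^{2^*-1}]$, and testing $-\Delta\Phi[f]=f$ with $\Phi[f]$ together with Hölder and Sobolev shows that $\Phi$ maps $L^{\frac{2^*}{2^*-1}}(\Omega)$ boundedly into $\H(\Omega)\hookrightarrow L^{2^*}(\Omega)$, with $\|\Phi[f]\|\le S^{-1/2}|f|_{\frac{2^*}{2^*-1}}$. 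Testing instead with $\Phi[g]$ gives the symmetric bilinear form $\langle f,g\rangle_G:=\int_\Omega\Phi[f]\,g\,dx=\int_\Omega\nabla\Phi[f]\cdot\nabla\Phi[g]\,dx$, which satisfies $\langle f,f\rangle_G=\|\Phi[f]\|^2\ge0$ and $|\langle f,g\rangle_G|\le S^{-1}|f|_{\frac{2^*}{2^*-1}}|g|_{\frac{2^*}{2^*-1}}$. With this in hand, \ref{iv} is immediate because both sides equal $\langle|u|^{2^*-1},|v|^{2^*-1}\rangle_G$; \ref{pos} is the maximum principle applied to $-\Delta\phi_u=|u|^{2^*-1}\ge0$; and \ref{rescaling} follows from uniqueness together with $-\Delta(t^{2^*-1}\phi_u)=|tu|^{2^*-1}$ for $t>0$.

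Next I would dispatch the norm bounds by testing the relevant linear equation against its own solution. For \ref{phiuu2*-1}: $\|\phi_u\|^2=\int_\Omega\phi_u|u|^{2^*-1}dx\le|\phi_u|_{2^*}\,|u|_{2^*}^{2^*-1}\le S^{-1/2}\|\phi_u\|\,S^{-(2^*-1)/2}\|u\|^{2^*-1}$ (Hölder with exponents $2^*$ and $\tfrac{2^*}{2^*-1}$, then Sobolev), and dividing by $\|\phi_u\|$ gives the claim since $\tfrac12+\tfrac{2^*-1}{2}=\tfrac{2^*}{2}$. For \ref{13}, test the equation for $\phi_u$ with $\delta|u|\in\H(\Omega)$: $\delta|u|_{2^*}^{2^*}=\delta\int_\Omega\nabla\phi_u\cdot\nabla|u|\,dx\le\delta\|\phi_u\|\,\|u\|\le\tfrac12\|\phi_u\|^2+\tfrac{\delta^2}{2}\|u\|^2$ by Young's inequality (using $\||u|\|=\|u\|$). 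For \ref{vi}, note that by linearity $w:=\phi_u-\sum_i\phi_{u_i}=\Phi\big[\,|u|^{2^*-1}-\sum_i|u_i|^{2^*-1}\big]$, so arguing as for \ref{phiuu2*-1} one gets $\|w\|\le S^{-1/2}\big|\,|u|^{2^*-1}-\sum_i|u_i|^{2^*-1}\big|_{\frac{2^*}{2^*-1}}$, and one further Sobolev step $|w|_{2^*}\le S^{-1/2}\|w\|$ gives the stated inequality. Taking a single term here also shows that $w\mapsto\phi_w$ is continuous from $\H(\Omega)$ to $\H(\Omega)$ when restricted to a finite-dimensional subspace (on which $\|\cdot\|\sim|\cdot|_{2^*}$, while $w\mapsto|w|^{2^*-1}$ is continuous from $L^{2^*}$ to $L^{\frac{2^*}{2^*-1}}$).

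For \ref{vii}, \ref{phiuu2*-1} makes $(\phi_{u_n})$ bounded in $\H(\Omega)$, so up to a subsequence $\phi_{u_n}\weakto\psi$ in $\H(\Omega)$ and, by Rellich, strongly in $L^p(\Omega)$ for $p\in[1,2^*)$; also $u_n\to u$ in such $L^p$ and, up to a further subsequence, a.e. Passing to the limit in $\int_\Omega\nabla\phi_{u_n}\cdot\nabla v\,dx=\int_\Omega|u_n|^{2^*-1}v\,dx$ — the right side converges because $(|u_n|^{2^*-1})$ is bounded in $L^{\frac{2^*}{2^*-1}}$ and converges a.e. to $|u|^{2^*-1}$, hence weakly there, while $v\in L^{2^*}$ — identifies $\psi=\phi_u$, and uniqueness of the limit upgrades this to the full sequence. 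For \eqref{starstarstarp}, put $w_n:=u_n-u\weakto0$, $a_n:=|u_n|^{2^*-1}$, $b_n:=|w_n|^{2^*-1}$, $c:=|u|^{2^*-1}$, so that the three integrals in \eqref{starstarstarp} are $\langle a_n,a_n\rangle_G$, $\langle b_n,b_n\rangle_G$, $\langle c,c\rangle_G$. I would then use a Brezis--Lieb type lemma to get $a_n-b_n\to c$ strongly in $L^{\frac{2^*}{2^*-1}}(\Omega)$, note that $b_n\weakto0$ in $L^{\frac{2^*}{2^*-1}}(\Omega)$ (it is bounded and converges a.e. to $0$ on the finite-measure set $\Omega$), and expand $\langle a_n,a_n\rangle_G-\langle b_n,b_n\rangle_G=2\langle b_n,a_n-b_n\rangle_G+\langle a_n-b_n,a_n-b_n\rangle_G$; the last term tends to $\langle c,c\rangle_G$ by continuity of $\langle\cdot,\cdot\rangle_G$ on $L^{\frac{2^*}{2^*-1}}(\Omega)$, while the cross term, written as $\langle b_n,c\rangle_G+\langle b_n,a_n-b_n-c\rangle_G$, tends to $0$ because $\Phi[c]\in L^{2^*}$ pairs with $b_n\weakto0$ and because of the duality estimate. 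Finally, \ref{viii}: the upper bound is \ref{phiuu2*-1}, and for the lower bound, by \ref{rescaling} the quotient $w\mapsto\|\phi_w\|/\|w\|^{2^*-1}$ is $0$-homogeneous on $W\setminus\{0\}$, hence equals its restriction to the compact unit sphere of $W$, where it is continuous (by the remark above) and strictly positive (if $\phi_w=0$ then $|w|^{2^*-1}\equiv0$, i.e. $w=0$); it therefore attains a positive minimum and a finite maximum, and one enlarges $C$ so both inequalities hold.

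The only genuinely delicate point is the convergence $a_n-b_n\to c$ in $L^{\frac{2^*}{2^*-1}}(\Omega)$ underpinning \eqref{starstarstarp}: this is a Brezis--Lieb statement for the Nemytskii map $u\mapsto|u|^{2^*-1}$ and must be argued by hand, combining the a.e. convergence of $u_n$, the uniform bound in $L^{2^*}(\Omega)$, and an elementary inequality of the form $\big|\,|a+b|^{2^*-1}-|b|^{2^*-1}\,\big|\le\e\,|b|^{2^*-1}+C_\e\,|a|^{2^*-1}$ followed by a dominated-convergence argument and letting $\e\to0$; everything else reduces to testing the linear equation for $\phi_u$ and bookkeeping with $\langle\cdot,\cdot\rangle_G$.
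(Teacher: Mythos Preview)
Your proof is correct and follows essentially the same approach as the paper's: testing the equation for $\phi_u$ against suitable functions for (iii)--(iv), exploiting the symmetric bilinear structure $\int_\Omega\nabla\phi_u\cdot\nabla\phi_v\,dx$ for (v)--(vi), and combining the Brezis--Lieb variant $|u_n|^{2^*-1}-|u_n-u|^{2^*-1}\to|u|^{2^*-1}$ in $L^{\frac{2^*}{2^*-1}}$ with the same algebraic expansion and weak-convergence bookkeeping for \eqref{starstarstarp}. The only noticeable difference is the lower bound in (viii): you use homogeneity and a compactness argument on the unit sphere of $W$, whereas the paper tests $-\Delta\phi_w=|w|^{2^*-1}$ against $|w|$ to get $|w|_{2^*}^{2^*}\le\|\phi_w\|\,\|w\|$ and then invokes the equivalence $|w|_{2^*}\ge C\|w\|$ on $W$ directly.
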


\begin{proof}
Property (\ref{pos}) is trivial (see e.g. \cite{DM04}) and (\ref{rescaling}) easily follows from
\[
-\Delta \phi_{tu}=t^{2^*-1}|u|^{2^*-1}= -\Delta(t^{2^*-1}\phi_u) \quad \hbox{in }\Omega
\]
and $\phi_{tu}=\phi_u=0$ on $\partial\Omega$.\\
Multiplying the second equation of \eqref{pb} by $\phi_u$, integrating and using H\"older and Sobolev inequalities we have
\[
\|\phi_u\|^2
= \int_{\Omega} \phi_u |u|^{2^*-1}\, dx
\le |\phi_u|_{2^*} |u|_{2^*}^{2^*-1}
\le S^{-\frac{2^*}{2}}\|u\|^{2^*-1}\|\phi_u\|
\]
and then (\ref{phiuu2*-1}).\\
Moreover, multiplying the second equation of \eqref{pb} by $|u|$ and integrating we have
\[
|u|_{2^*}^{2^*} = \int_{\Omega} \n \phi_u \n |u| \, dx \le \frac{1}{2\delta}  \|\phi_u\|^2 + \frac{\delta}{2} \| u\|^2
\quad
\hbox{for any }\delta>0
\]
and so (\ref{13}).\\
To obtain \eqref{iv} we observe that
$$\int_\Omega \phi_v |u|^{2^*-1}\, dx = \int_\Omega \nabla\phi_u\nabla\phi_v\, dx =\int_\Omega \phi_u|v|^{2^*-1}\, dx.$$
A further simple computation gives
\begin{equation*}
\begin{aligned}
\left|\phi_u-\sum_{i=1}^{k}\phi_{u_i}\right|_{2^*}^2
&\le \frac{1}{S} \left\|\phi_u-\sum_{i=1}^{k}\phi_{u_i}\right\|^2\\
&=\frac{1}{S} \int (\phi_u-\sum_{i=1}^{k}\phi_{u_i})(|u|^{2^*-1}-\sum_{i=1}^{k}|u_i|^{2^*-1})\,dx\\
&\le \frac{1}{S} \left|\phi_u-\sum_{i=1}^{k}\phi_{u_i}\right|_{2^*}\left||u|^{2^*-1}-\sum_{i=1}^{k}|u_i|^{2^*-1}\right|_{\frac{2^*}{2^*-1}}
\end{aligned}
\end{equation*}
and \eqref{vi} follows.\\
To prove the first part of (\ref{vii}) we can proceed as in \cite[Proposition 2.4]{Ruiz}.
Furthermore by applying \eqref{iv} we get
\begin{align*}
&\int_\Omega \phi_{u_n}|u_n|^{2^*-1}\, dx -\int_\Omega \phi_{u_n-u}|u_n-u|^{2^*-1}\, dx \\
&=\int_{\Omega} \left(\phi_{u_n}-\phi_{u_n-u}\right)\left(|u_n|^{2^*-1}-|u_n-u|^{2^*-1}\right)\, dx
+2\int_\Omega \left(\phi_{u_n}-\phi_{u_n-u}\right)|u_n-u|^{2^*-1}\, dx.
\end{align*}
An easy variant of the classical Brezis-Lieb Lemma (see also \cite[Lemma 2.5]{MVS}) yields that
$$
|u_n|^{2^*-1}-|u_n-u|^{2^*-1} \to |u|^{2^*-1}\quad \hbox{in } L^{\frac{2^*}{2^*-1}}(\Omega) \hbox{ as }n\to+\infty
$$ and applying \eqref{vi} we get that
\begin{equation}\label{starstarp}
\phi_{u_n}-\phi_{u_n-u}\to \phi_u \quad \hbox{in } L^{2^*}(\Omega) \hbox{ as }n\to+\infty.
\end{equation}
Hence
\[
\int_{\Omega} \left(\phi_{u_n}-\phi_{u_n-u}\right)\left(|u_n|^{2^*-1}-|u_n-u|^{2^*-1}\right)\, dx
\to \int_\Omega \phi_u |u|^{2^*-1}\, dx
\hbox{ as }n\to+\infty.
\]
Moreover, applying again \cite[Proposition 5.4.7]{W2013}, we have $|u_n-u|^{2^*-1}\rightharpoonup 0$ in $L^{\frac{2^*}{2^*-1}}(\Omega)$.
Hence, since $\phi_u\in L^{2^*}(\Omega)$ and using also \eqref{starstarp},

	\begin{multline*}
		\int_\Omega \left(\phi_{u_n}-\phi_{u_n-u}\right)|u_n-u|^{2^*-1}\, dx\\
		= \int_\Omega \left(\phi_{u_n}-\phi_{u_n-u}-\phi_u\right) |u_n-u|^{2^*-1}\, dx
			+ \int_\Omega \phi_u |u_n-u|^{2^*-1}\, dx \to 0
	\end{multline*}
as $n\to +\infty$.\\
Finally, in order to see (\ref{viii}), consider $W$ a finite dimensional subspace of $\H(\Omega)$.  The second inequality is already known by (\ref{phiuu2*-1}) and it actually does not depend on $W$.\\
Now, pick any $w\in W$ and consider the second equation of \eqref{pb}.
\\
Multiplying by $|w|$, integrating and applying the Holder inequality, we have

	\begin{equation*}
		|w|_{2^*}^{2^*}=\int_{\Omega} \n \phi_w \n |w|\, dx\le \|\phi_w\|\|w\|.
	\end{equation*}
By equivalence of norms in finite dimensional spaces, we have that there exists $C$ depending on $W$ such that $C\|w\|\le |w|_{2^*}$, and then we are done.
\end{proof}

We conclude this section observing that, applying (\ref{13}) in Lemma \ref{Lemmaphi} for $\delta=1$, we have that for all $u\in H^1_0(\Omega)$
\begin{equation}
\label{J}
I(u)\le \frac{N}{N+2}\| u\|^2-\frac{\lambda}{2}|u|^2_2-\frac{N-2}{N+2}|u|_{2^*}^{2^*}.
\end{equation}

\subsection{The Palais-Smale condition}
As usual, in order to have compactness, a first crucial step consists in checking the Palais-Smale (PS for short) condition. We recall that a sequence $(u_n)\subset H^1_0(\Omega)$ is called a PS sequence for $I$ at level $c$ if $I(u_n)\to c$ and $I'(u_n) \to 0 $ in $[H^1_0(\Omega)]^{-1}$. The functional $I$ satisfies the PS condition at level $c$, if every PS sequence at level $c$ has a convergent subsequence in $H^1_0(\Omega)$.\\
Since we are in the critical case, we do not know if the functional $I$ satisfies the PS condition at all levels.  However there is a set of values in which it is preserved.
\begin{lemma}\label{PS}
	The functional $I$ satisfies the Palais-Smale condition in $\left]-\infty, \frac{2}{N+2}S^{\frac N 2}\right[$.
\end{lemma}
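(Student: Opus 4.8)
The plan is to follow the classical Brezis–Nirenberg concentration-compactness scheme, adapted to the nonlocal term. Let $(u_n)$ be a PS sequence for $I$ at level $c < \frac{2}{N+2}S^{N/2}$. The first step is to show $(u_n)$ is bounded in $H^1_0(\Omega)$. Using $I(u_n) - \frac{1}{2(2^*-1)}I'(u_n)[u_n] = \left(\frac12 - \frac{1}{2(2^*-1)}\right)\|u_n\|^2 - \lambda\left(\frac12 - \frac{1}{2(2^*-1)}\right)|u_n|_2^2$ and the fact that the $\lambda|u_n|_2^2$ term is controlled by $\|u_n\|^2$ times a small constant (via the Poincaré/eigenvalue inequality, or just by interpolation when $\lambda$ is not too large — here one must be mildly careful since $\lambda$ can be large, but the coefficient $\frac12-\frac{1}{2(2^*-1)} = \frac{2}{N+2}$ multiplies both terms, so the quadratic-in-$\|u_n\|$ part still dominates after absorbing the lower-order $L^2$ term using $|u_n|_2 \le C\|u_n\|$ and Young's inequality, since the $o_n(1)(1+\|u_n\|)$ remainder is subquadratic), we conclude $\|u_n\| \le C$.

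Next, up to a subsequence, $u_n \rightharpoonup u$ in $H^1_0(\Omega)$, $u_n \to u$ in $L^2(\Omega)$ and a.e., and by Lemma \ref{Lemmaphi}\eqref{vii}, $\phi_{u_n} \rightharpoonup \phi_u$ with $\phi_{u_n} \to \phi_u$ strongly in $L^p$ for $p < 2^*$. Passing to the limit in $I'(u_n) \to 0$ shows $u$ is a weak solution of \eqref{pbrid}, hence $I'(u)[u] = 0$, i.e. $\|u\|^2 = \lambda|u|_2^2 + \int_\Omega \phi_u|u|^{2^*-1}$. Setting $v_n := u_n - u$, I would use the Brezis–Lieb lemma for the gradient term, $\|u_n\|^2 = \|v_n\|^2 + \|u\|^2 + o_n(1)$, together with the nonlocal Brezis–Lieb-type identity \eqref{starstarstarp} to get
\begin{equation*}
\int_\Omega \phi_{u_n}|u_n|^{2^*-1}\,dx = \int_\Omega \phi_{v_n}|v_n|^{2^*-1}\,dx + \int_\Omega \phi_u|u|^{2^*-1}\,dx + o_n(1).
\end{equation*}
Combining these with $I(u_n) \to c$, $I'(u_n)[u_n] \to 0$, $I'(u_n)[u] \to 0$, and the facts $I(u) \ge 0$ (which follows from $I'(u)[u]=0$ and the structure of $I$, since $I(u) = \frac{2}{N+2}\|u\|^2 - \frac{2\lambda}{N+2}|u|_2^2 + \left(\frac{1}{2(2^*-1)} - \frac{N-2}{2N}\right)\cdots$ — more simply, $I(u) = I(u) - \frac{1}{2(2^*-1)}I'(u)[u] = \frac{2}{N+2}(\|u\|^2 - \lambda|u|_2^2) = \frac{2}{N+2}\int_\Omega\phi_u|u|^{2^*-1} \ge 0$ by Lemma \ref{Lemmaphi}\eqref{pos}), I obtain that if $\ell := \lim \|v_n\|^2 = \lim \int_\Omega \phi_{v_n}|v_n|^{2^*-1}$, then $c = I(u) + \frac{2}{N+2}\ell \ge \frac{2}{N+2}\ell$.

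Now suppose $\ell > 0$. From Lemma \ref{Lemmaphi}\eqref{phiuu2*-1}, $\int_\Omega \phi_{v_n}|v_n|^{2^*-1} = \|\phi_{v_n}\|^2 \le S^{-2^*}\|v_n\|^{2(2^*-1)}$, so passing to the limit, $\ell \le S^{-2^*}\ell^{2^*-1}$, giving $\ell \ge S^{2^*/(2^*-2)} \cdot (\text{adjust}) = S^{N/2}$ after computing $\frac{2^*}{2^*-2} = \frac{N}{2}$ and $2^*-1 > 1$; this forces $\ell \ge S^{N/2}$. But then $c \ge \frac{2}{N+2}S^{N/2}$, contradicting the hypothesis. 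Hence $\ell = 0$, i.e. $\|u_n - u\| \to 0$, which is the desired strong convergence. The main obstacle is the correct bookkeeping of the nonlocal term under the splitting $u_n = u + v_n$ — this is exactly what \eqref{starstarstarp} provides — together with getting the sharp exponent in the estimate $\ell \le S^{-2^*}\ell^{2^*-1}$ so that the threshold comes out as $\frac{2}{N+2}S^{N/2}$ rather than something weaker; I would double-check this by noting $S^{-2^*}\ell^{2^*-2} \ge 1 \iff \ell^{2^*-2} \ge S^{2^*} \iff \ell \ge S^{2^*/(2^*-2)} = S^{N/2}$.
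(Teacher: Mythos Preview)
Your argument after boundedness is essentially the paper's: the Brezis--Lieb splitting via \eqref{starstarstarp}, the observation $I(u)\ge 0$ from $I'(u)[u]=0$, and the dichotomy $\ell=0$ or $\ell\ge S^{N/2}$ coming from $\|\phi_{v_n}\|^2\le S^{-2^*}\|v_n\|^{2(2^*-1)}$ all match.

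The genuine gap is the boundedness step. Your combination
\[
I(u_n)-\tfrac{1}{2(2^*-1)}I'(u_n)[u_n]=\tfrac{2}{N+2}\bigl(\|u_n\|^2-\lambda|u_n|_2^2\bigr)
\]
does \emph{not} control $\|u_n\|$ when $\lambda\ge\lambda_1$: the quadratic form $\|u\|^2-\lambda|u|_2^2$ is indefinite (it equals $(\lambda_1-\lambda)|u|_2^2$ on $\mathbb{R}e_1$), so ``absorbing the $L^2$ term via $|u_n|_2\le C\|u_n\|$'' goes the wrong way, and Young's inequality cannot separate two quadratic terms of the same homogeneity. Since the lemma is used in the paper precisely for $\lambda\ge\lambda_1$ (the sign-changing regime), this is not a cosmetic issue.

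The paper instead takes the combination
\[
2I(u_n)-I'(u_n)[u_n]=\tfrac{2^*-2}{2^*-1}\|\phi_{u_n}\|^2,
\]
which immediately gives $\|\phi_{u_n}\|^2\le C(1+\|u_n\|)$. Then property \eqref{13} of Lemma~\ref{Lemmaphi}, namely $\|\phi_u\|^2\ge 2\delta|u|_{2^*}^{2^*}-\delta^2\|u\|^2$, yields $|u_n|_{2^*}^{2^*}\le C(1+\|u_n\|^2)$, hence $|u_n|_2^2\le C(1+\|u_n\|^{4/2^*})$ by H\"older. Feeding this back into $\|u_n\|^2=2I(u_n)+\lambda|u_n|_2^2+\tfrac{1}{2^*-1}\|\phi_{u_n}\|^2$ gives $\|u_n\|^2\le C(1+\|u_n\|^{4/2^*}+\|u_n\|)$ with $4/2^*<2$, and boundedness follows for every $\lambda$. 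The key point you are missing is that the nonlocal term, through \eqref{13}, supplies the coercivity that the linear term $-\lambda|u|_2^2$ destroys.
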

\begin{proof}
	Let $(u_n)\subset H^1_0(\Omega)$ be a PS sequence for $I$  at level $c<  \frac{2}{N+2}S^{\frac N 2}$. First we show that $(u_n)$ is bounded. In what follows all the convergences are meant up to a subsequence.\\
	Indeed, using also \eqref{13} of Lemma \ref{Lemmaphi}, we have
	\[
	2c+o_n(1)\|u_n\|
	=
	2I(u_n)-I'(u_n)[u_n]
	=\frac{2^*-2}{2^*-1}\|\phi_{u_n}\|^2
	\ge
	C_1|u_n|_2^{2^*}-C_2\|u_n\|^2
	\]
	so that
	\begin{equation*}\label{PS2}
	\|\phi_{u_n}\|^2\le C(1+\|u_n\|)
	\end{equation*}
	and
	\begin{equation*}
	|u_n|_{2}^{2^*}\le C(1+\|u_n\|^2).
	\end{equation*}
	Hence
	\begin{equation*}
	\|u_n\|^2
	= 2 I(u_n)+\lambda |u_n|_2^2+\frac{1}{2^*-1}\|\phi_{u_n}\|^2
	\le
	C\left(1+\|u_n\|^{\frac{4}{2^*}}+\|u_n\|\right)
	\end{equation*}
	and, since $2<2^*$, it follows that $(u_n)$ is bounded.\\ Then we can assume that there exists $u\in H^1_0(\Omega)$ such that $u_n \rightharpoonup u$ in $H^1_0(\Omega)$, $u_n \to u$ in $L^q(\Omega)$ for every $q\in [1, 2^*)$ and a.e. in $\Omega$. \\
	Now, let us set $f(s):=|s|^{2^*-3 }s$. Since $(u_n)$ is bounded in $L^{2^*}(\Omega)$, then $(f(u_n))$ is bounded $L^{\frac{2^*}{2^*-2}}(\Omega)$ and so, in a standard way, it follows that $f(u_n) \rightharpoonup f(u)$ in $L^{\frac{2^*}{2^*-2}}(\Omega)$.
	Then, for all $\varphi\in C^\infty_0(\Omega)$, using also \eqref{vii} of Lemma \ref{Lemmaphi}, H\"older and Sobolev inequalities, and since $\phi_u\varphi\in L^{\frac{2^*}{2}}(\Omega)$,
	\begin{align*}
	\left|\int_\Omega f(u_n)\phi_{u_n}\varphi\, dx
	-\int_\Omega f(u)\phi_{u}\varphi\, dx\right|
	&\le \left|\int_\Omega \left(\phi_{u_n}-\phi_u\right)f(u_n)\varphi\, dx \right|
	+\left|\int_\Omega\left(f(u_n)-f(u)\right)\phi_u \varphi\, dx\right|\\
	&\le
	C |\varphi|_\infty\|u_n\|^{{2^*-2}}|\phi_{u_n}-\phi_u|_{\frac{2^*}{2}}
	+o_n(1)\longrightarrow 0
	\end{align*}
	as $n\to+\infty$.
	Hence
	$$I'(u_n)[\varphi] \to I'(u)[\varphi]$$
	and, by density, we get
	$$0=I'(u)[u]=\|u\|^2-\lambda|u|_2^2-\|\phi_u\|^2,$$
	from which
	\begin{equation}\label{posI}
	I(u)=\frac{2^*-2}{2(2^*-1)}\|\phi_u\|^2\ge 0.
	\end{equation}
	Since $u_n \rightharpoonup u$ in $H^1_0(\Omega)$ we get
	$$\|u_n\|^2 =\|u_n-u\|^2+\|u\|^2+o_n(1).$$
	Then, by using the strong convergence $u_n \to u$ in $L^2(\Omega)$  and \eqref{starstarstarp} we get
	\begin{equation}\label{J*}
	I(u_n)=I(u) + I_0(u_n-u)+o_n(1)
	\end{equation}
	with $$I_0(u)=\frac 12 \|u\|^2-\frac{1}{2(2^*-1)}\|\phi_u\|^2.$$ Furthermore
	\begin{equation}\label{J'*}
	\begin{aligned}
	o_n(1)&= I'(u_n)[u_n-u]=( I'(u_n)-I'(u))[u_n-u]\\
	&=\|u_n-u\|^2-\lambda|u_n-u|_2^2-\int_\Omega \phi_{u_n}f(u_n)(u_n-u)\, dx+\int_\Omega \phi_u f(u)(u_n-u)\, dx.
	\end{aligned}
	\end{equation}
	Since $u_n \rightharpoonup u$ in $L^{2^*}(\Omega)$ and $\phi_u f(u)\in L^{\frac{2^*}{2^*-1}}(\Omega)$,
	\[
	\int_\Omega \phi_u f(u)(u_n-u)\, dx =o_n(1).
	\]
	Moreover
	\begin{equation}
	\label{IV}
	\begin{aligned}
	\int_\Omega \phi_{u_n}f(u_n)(u_n-u)\, dx
	&=
	\int_\Omega \phi_{u_n} |u_n|^{2^*-1}\, dx
	-\int_\Omega \phi_u |u|^{2^*-1}\, dx
	\\
	&\quad-\int_\Omega \left(\phi_{u_n}-\phi_u\right) f(u_n) u\, dx
	-\int_\Omega \phi_u u \left(f(u_n)-f(u)\right)\, dx.
	\end{aligned}
	\end{equation}
	Since the sequence $((\phi_{u_n}-\phi_u) f(u_n))$ is bounded in $L^\frac{2^*}{2^*-1}(\Omega)$, $\phi_{u_n}\to\phi_u$ and $f(u_n)\to f(u)$ a.e. in $\Omega$, by \cite[Proposition 5.4.7]{W2013} we have
	\begin{equation}
	\label{V}
	\int_\Omega \left(\phi_{u_n}-\phi_u\right) f(u_n) u\, dx=o_n(1)
	\end{equation}
	and, analogously, we can prove that
	\begin{equation}
	\label{VI}
	\int_\Omega \phi_u u \left(f(u_n)-f(u)\right)\, dx=o_n(1).
	\end{equation}
	Then, using \eqref{starstarstarp}, \eqref{V} and \eqref{VI} in \eqref{IV}, we obtain
	\[
	\int_\Omega \phi_{u_n}f(u_n)(u_n-u)\, dx
	=\int_\Omega \phi_{u_n-u}|u_n-u|^{2^*-1}\, dx +o_n(1).
	\]
	Moreover, by \eqref{J'*} we get
	\begin{equation}
	\label{penultima}
	\|u_n-u\|^2 -\int_\Omega \phi_{u_n-u}|u_n-u|^{2^*-1}\, dx=o_n(1)
	\end{equation}
	and so
	$$I_0(u_n-u)=\frac 12 \|u_n-u\|^2 -\frac{1}{2(2^*-1)}\|u_n-u\|^2+o_n(1)= \frac{2}{N+2}\|u_n-u\|^2+o_n(1).$$ On the other hand, from \eqref{J*} we get, using also \eqref{posI},
	$$I_0(u_n-u)= I(u_n)-I(u)+o_n(1)\le  c+ o_n(1)<\frac{2}{N+2}S^{\frac N 2}.$$ Then it follows that 
	$$\limsup_n\|u_n-u\|^2 <S^{\frac N 2}$$
	and so, by \eqref{penultima} and \eqref{phiuu2*-1} of Lemma \ref{Lemmaphi},
	$$
	o_n(1) =\|u_n-u\|^2-\int_\Omega \phi_{u_n-u}|u_n-u|^{2^*-1}\, dx
	\ge \|u_n-u\|^2\left[1-\left(\frac{\|u_n-u\|^{2}}{S^{\frac{N}{2}}}\right)^{2^*-2}\right]\ge C\|u_n-u\|^2.
	$$
	Hence $u_n \to u$ in $H^1_0(\Omega)$.
\end{proof}

\section{The dimension $N=3$: positive solutions}\label{N3}
In dimension $3$ this type of problems turns out to be rather delicate even in the Brezis-Nirenberg problem \eqref{eq:BN} (see \cite{BN}) and so we consider only the case in which $\Omega$ is a ball. Moreover, for simplicity but without losing generality, we take $\Omega=B_1$ so that $\lambda_1=\pi^2$.

Before we present our existence and nonexistence results, we introduce the following important symmetry property of all positive solutions, essentially based on the moving plane technique due to Gidas, Ni and Nirenberg \cite{GNN}. To prove it we use a standard argument suitably adapted for our purposes. We point out that the dimension $N$ does not have any role in the proof, even if we are here interested only in the case $N=3$.

    \begin{lemma}\label{le:GNN}
        If $\Omega=B_1$, any positive solution $(u,\phi)$ of \eqref{pb}, with $u, \phi \in C^2(B_1)\cap C(\bar {B_1})$, is such that both $u$ and $\phi$ are radially symmetric.
    \end{lemma}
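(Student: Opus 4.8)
The plan is to apply the moving plane method of Gidas, Ni and Nirenberg to the \emph{system} \eqref{pb} directly, exploiting the fact that the coupling through $\phi$ is cooperative and monotone. Fix a direction, say $e_1$, and for $\gamma\in(-1,1)$ set $T_\gamma=\{x_1=\gamma\}$, let $\Sigma_\gamma=\{x\in B_1 : x_1>\gamma\}$, and denote by $x^\gamma=(2\gamma-x_1,x_2,\dots,x_N)$ the reflection of $x$ across $T_\gamma$. Introduce the reflected functions $u_\gamma(x)=u(x^\gamma)$ and $\phi_\gamma(x)=\phi(x^\gamma)$, and the differences $w_\gamma=u_\gamma-u$, $\psi_\gamma=\phi_\gamma-\phi$ on $\Sigma_\gamma$. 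Since $u$ and $\phi$ vanish on $\partial B_1$ and are positive inside, on the curved part of $\partial\Sigma_\gamma$ we have $w_\gamma\ge 0$ and $\psi_\gamma\ge 0$, while on $T_\gamma$ both vanish. The goal is to show $w_\gamma\ge0$ and $\psi_\gamma\ge 0$ in $\Sigma_\gamma$ for all $\gamma\in[0,1)$; then letting $\gamma\to 0$ and repeating with $-e_1$ gives symmetry in $x_1$, and since the direction is arbitrary, radial symmetry of both $u$ and $\phi$ follows in the standard way (together with $\partial u/\partial x_1<0$, etc.).

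The key computation is that $w_\gamma$ and $\psi_\gamma$ satisfy a cooperative elliptic system with a right-hand side linear in $(w_\gamma,\psi_\gamma)$ via the mean value theorem. Indeed, subtracting the equations,
\begin{align*}
-\Delta w_\gamma &= \lambda w_\gamma + \bigl(|u_\gamma|^{2^*-3}u_\gamma-|u|^{2^*-3}u\bigr)\phi_\gamma + |u|^{2^*-3}u\,\psi_\gamma,\\
-\Delta \psi_\gamma &= |u_\gamma|^{2^*-1}-|u|^{2^*-1},
\end{align*}
so, writing $|u_\gamma|^{2^*-3}u_\gamma-|u|^{2^*-3}u=c_1(x)w_\gamma$ and $|u_\gamma|^{2^*-1}-|u|^{2^*-1}=c_2(x)w_\gamma$ with $c_1,c_2\ge 0$ bounded (using positivity of $u$ and $C^2$ regularity, with the powers $2^*-3>0$ and $2^*-1>0$ both admissible for $N\ge 3$), we obtain
\begin{align*}
-\Delta w_\gamma - (\lambda + c_1\phi_\gamma) w_\gamma &= (|u|^{2^*-1}u/|u|^2)\,\psi_\gamma\ \ge 0 \text{ if } \psi_\gamma\ge0,\\
-\Delta \psi_\gamma &= c_2 w_\gamma\ \ge 0 \text{ if } w_\gamma\ge0,
\end{align*}
where the coefficient of $\psi_\gamma$ in the first line, namely $|u|^{2^*-3}u=|u|^{2^*-2}>0$, is nonnegative and the coefficient $c_2$ of $w_\gamma$ in the second is nonnegative. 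This is a cooperative system, so the maximum principle for such systems applies on $\Sigma_\gamma$ once the measure of $\Sigma_\gamma$ is small (the zeroth-order coefficient $\lambda+c_1\phi_\gamma$ is bounded, so a narrow-domain/small-measure maximum principle holds), giving $w_\gamma\ge0$ and $\psi_\gamma\ge0$ for $\gamma$ close to $1$.

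From here the procedure is the usual continuity-and-connectedness argument: let $\gamma_0=\inf\{\gamma\ge 0 : w_\mu\ge0,\ \psi_\mu\ge 0 \text{ in }\Sigma_\mu\ \forall \mu\in[\gamma,1)\}$; by continuity $w_{\gamma_0}\ge0$, $\psi_{\gamma_0}\ge0$, and if $\gamma_0>0$ one uses the strong maximum principle (and Hopf's lemma at $T_{\gamma_0}$, noting $w_{\gamma_0}$ is not identically zero, else $u$ would be symmetric about $T_{\gamma_0}$ and vanish on a hyperplane inside $B_1$, contradicting positivity) to conclude $w_{\gamma_0}>0$, $\psi_{\gamma_0}>0$ in $\Sigma_{\gamma_0}$, and then a uniform-measure argument (splitting $\Sigma_{\gamma_0-\epsilon}$ into a compact part where $w_{\gamma_0},\psi_{\gamma_0}$ are bounded below and a thin part near $\partial B_1$ of small measure) shows the inequalities persist for $\gamma$ slightly below $\gamma_0$, contradicting minimality. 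Hence $\gamma_0=0$, and symmetry follows. The main obstacle I anticipate is carefully setting up the maximum principle for the coupled system on small-measure domains --- one must verify that the cooperative structure and the boundedness of all coefficients (which rely on $u,\phi\in C^2(B_1)\cap C(\overline{B_1})$ and the exponents being in the right range) genuinely allow one to treat $(w_\gamma,\psi_\gamma)$ simultaneously rather than iterating, or alternatively to run an iteration $w_\gamma\ge0\Rightarrow\psi_\gamma\ge0\Rightarrow\ldots$ that closes; this bookkeeping, while standard, is where the nonlocal/coupled nature of the problem actually enters.
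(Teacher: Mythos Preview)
Your approach is correct and follows the Berestycki--Nirenberg variant of moving planes: you start the process via a small-measure (narrow-domain) maximum principle for the cooperative system $(w_\gamma,\psi_\gamma)$, and continue it by the compact-set-plus-thin-strip argument. The paper instead follows the original Gidas--Ni--Nirenberg scheme: it assumes $u\in C^1(\bar B_1)$, starts the process via the Hopf lemma on $\partial B_1$ (so that $\partial u/\partial x_3<0$ near the boundary forces $u<u_\mu$ for $\mu$ close to $1$), and then, crucially, treats the coupling \emph{sequentially} rather than as a system. At the critical $\mu_0$ one already knows $w=u_{\mu_0}-u\ge 0$; plugging this into the second equation gives $-\Delta(\phi_{\mu_0}-\phi)=u_{\mu_0}^{2^*-1}-u^{2^*-1}\ge 0$ with nonnegative boundary data, hence $\phi_{\mu_0}\ge\phi$ by the plain weak maximum principle, and then the first equation yields directly $-\Delta w\ge 0$ without any zeroth-order term. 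This sidesteps entirely the need for a system-level or narrow-domain maximum principle (and avoids your worry about the sign of $\lambda+c_1\phi_\gamma$). Your route buys robustness---it does not require $C^1$ regularity up to the boundary---while the paper's iteration is more elementary, reducing everything to $-\Delta w\ge 0$. One small correction: your claim that $2^*-3>0$ for all $N\ge 3$ is false (it fails for $N\ge 6$), though this does not affect the argument here since $s\mapsto s^{2^*-2}$ is $C^1$ on $(0,\infty)$ and, in the context where the lemma is actually used, $N=3$.
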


    \begin{proof}
        Assume $u\in C^1(\bar{B_1})$ in order to compute the derivatives in $\partial B_1$ (this further assumption can be removed generalizing the meaning of the derivatives on the boundary) and $\lambda >0$ since we have no solution for $\lambda\le 0$.\\
Define for any $\mu \in [0,1]$ the sets
$$P_\mu=\{x\in\R^3\mid x_3=\mu\}$$
and
$$E_\mu=\{x\in B_1\mid \mu<x_3<1\}.$$
Moreover, for any $x=(x_1, x_2, x_3)\in\R^3$, let $x_\mu = (x_1, x_2, 2\mu - x_3)$ be the reflection of $x$ with respect to $P_{\mu}$ and assume the following notation for any $x\in E_\mu$:
$$u_{\mu}(x)=u(x_\mu).$$
By the Hopf Lemma (see for example \cite[Lemma 3.4]{GT}), we have that for all $\mu\in ]0,1[$ the outward derivative $\frac {\partial u}{\partial \nu}$ is negative in $\partial E_\mu\cap \partial B_1$. Of course, since $\nabla u$ and $\nu$ have the same direction, this means that the verse of $\nabla u$ is inward, and then we deduce that $\frac{\partial u}{\partial x_3}<0$ in $\partial E_\mu\cap \partial B_1$.\\
By continuity, we deduce that for values of $\mu$ close to 1, we have $\frac{\partial u}{\partial x_3}<0$ everywhere in $E_\mu\cup \tilde{E_\mu}$, where $\tilde{E_\mu}=\{x\in B_1\mid x_\mu\in E_\mu\}$. Of course, this fact implies that values of $\mu$ close to 1 are in $\mathcal A:= \{\mu\in ]0,1] \mid \forall x\in E_\mu: u(x)<u_\mu(x) \}$.\\
Now, define $\mu_0=\inf\{\mu\in]0,1]\mid \eta\in\mathcal A,\,\forall \eta\in[\mu,1]\}$.\\
We want to prove by contradiction that $\mu_0=0$ so that we assume $\mu_0>0$.\\
Observe that in $E_{\mu}$ we have $-\Delta u_{\mu}=(-\Delta u)_{\mu}$, $\frac{\partial u_{\mu}}{\partial x_3}=-\left(\frac{\partial u}{\partial x_3}\right)_\mu$ and  $\phi_{u_{\mu}} = (\phi_u)_{\mu}$.\\
Denote $\phi=\phi_u$, $\phi_\mu=(\phi_u)_\mu$ and $w=u_{\mu_0} - u$.\\
Of course we have  $w\ge 0$ in $\overline{E_{\mu_0}}$   and
    \begin{equation}\label{eq:w}
        -\Delta w = \lambda w+ u_{\mu_0}^{2^*-2}\phi_{\mu_0} - u^{2^*-2}\phi
        \hbox{ in } E_{\mu_0}.
    \end{equation}
Since we have
    \begin{align*}
        &-\Delta (\phi_{\mu_0}-\phi)= u_{\mu_0}^{2^*-1} - u^{2^*-1}\ge 0\hbox{ in } E_{\mu_0},\\
        &\phi_{\mu_0}-\phi\ge 0\hbox{ on } \partial E_{\mu_0},
    \end{align*}
by the weak maximum principle we deduce that $\phi_{\mu_0}\ge \phi$ in $E_{\mu_0}$.\\
Now, by \eqref{eq:w}, we have that $-\Delta w \ge 0$ in $E_{\mu_0}$ and then, by the strong maximum principle and since $w=0$ in $P_{\mu_0}$, we have that either $w>0$ in $E_{\mu_0}$ or $w$ is constant. Of course $w$ is not constant since $w>0$ on $\partial E_{\mu_0}\setminus P_{\mu_0}$ and then $w$ is positive in $E_{\mu_0}$.\\
Moreover, by the Hopf Lemma, in $P_{\mu_0}$ we have $\frac{\partial w}{\partial x_3}>0$  and then, since $x=x_{\mu_0}$, we get $\frac{\partial u}{\partial x_3}=-\frac{1}2\frac{\partial w}{\partial x_3}<0$.\\
Then, again by continuity, there exists $V$ a neighborhood of $P_{\mu_0}$ such that $\frac{\partial u}{\partial x_3}<0$ in $V\cap B_1$. \\
In conclusion, since $u<u_{\mu_0}$ in $E_{\mu_0}$ and $\frac{\partial u}{\partial x_3}<0$ in $V\cap B_1$, we deduce that actually there exists $\e>0$ such that $\mu_0-\e>0$ and $[\mu_0-\e,1]\subset \mathcal A$: a contradiction.\\
We deduce that $\mu_0=0$ which implies
	\begin{equation}\label{eq:mov1}
		u\le u_0\hbox{ in }E_0.
 	\end{equation}
Since the same arguments can be repeated replacing the interval $]0,1]$ with $[-1,0[$, $E_\mu$ with $F_\mu=\{x\in B_1\mid -1< x_3 < \mu\}$ and defining $\mu_0=\sup\{\mu\in[-1,0[\,\mid \eta\in\mathcal A,\,\forall \eta\in[-1,\mu]\}$, we can prove that
	\begin{equation}\label{eq:mov2}
		u\le u_0\hbox{ in }F_0.
 	\end{equation}
It is easy to see that \eqref{eq:mov1} and \eqref{eq:mov2} together imply that $u=u_0$ in $B_1$. Since this conclusion can be obtained in the same way for any reflection plane containing the origin, we deduce that $u$ is radial.
The symmetry of $\phi$ comes easily from that of $u$.
    \end{proof}

\subsection{An existence result}\label{sb31}
This section is devoted to the proof of (\ref{i12}) of Theorem \ref{Th1}.
Since the geometrical assumptions of the Mountain Pass Theorem are satisfied, we set
$$c:=\inf_{\gamma\in \Gamma}\max_{t\in [0, 1]} I(\gamma(t)),$$ where $\Gamma=\left\{\gamma\in C([0, 1], H^1_0(\Omega)): \gamma(0)=0, I(\gamma(1))<0\right\}$ and, by Lemma \ref{PS}, we only have to prove that
$$c< \frac 2 5 S^{\frac 3 2}=\frac{3\sqrt{3}}{10}\pi^2.$$
So we consider a smooth positive function $\varphi=\varphi(r)$ such that $\varphi(0)=1$, $\varphi'(0)=0$ and $\varphi(1)=0$.
Following \cite{BN} we set $r=|x|$,
$$u_\e(r)=\frac{\varphi(r)}{(\e+r^2)^{\frac 12}}$$
and we get the estimates
$$\| u_\e\|^2= K_1\e^{-\frac 12}
+4\pi\int_0^1 |\varphi'(r)|^2\, dr
+O(\e^{\frac 12}),
\qquad
|u_\e|_2^2 =4\pi\int_0^1 \varphi^2(r)\, dr+O(\e^{\frac 12}),$$ where
$$K_1:=\int_{\mathbb R^3}\frac{|x|^2}{(1+|x|^2)^3}\, dx
= \frac{3}{4}\pi^2.$$
We have that
\begin{equation}
\label{supte}
\sup_{t>0} I(t u_\e)= I(t_\e u_\e)=\frac 2 5 \frac{\left(\|u_\e\|^2-\lambda  |u_\e|_2^2\right)^{\frac 54} }{\| \phi_{u_\e}\|^{\frac 12}}
\end{equation}
where
$$t_\e:=\sqrt[8]{\frac{\|u_\e\|^2-\lambda  |u_\e|_2^2 }{\| \phi_{u_\e}\|^2}}.$$
Moreover
\begin{equation}\label{num}
\left(\|u_\e\|^2-\lambda  |u_\e|_2^2\right)^{\frac 54}
= K_1^\frac{5}{4}\e^{-\frac 5 8}\left(1+ \frac 54\frac{A(\varphi)}{K_1}\sqrt{\e} + o(\sqrt{\e})\right)
\end{equation}
where
$$A(\varphi):=4\pi\left(\int_0^1 |\varphi'(r)|^2\, dr -\lambda \int_0^1 \varphi^2(r)\, dr\right).$$
Now, since $u_\e$ is radial and smooth, also $\phi_{u_\e}$ is radial and smooth. Moreover, for $s\in(0,1)$
$$\phi_{u_\e}''(s)+2\frac{\phi_{u_\e}'(s)}s=-u^5_\e(s),$$
namely,
$$(s^{2}\phi_{u_\e}'(s))'=-s^2u^5_\e(s).$$
Integrating in $(0,r)$, for $r\in(0,1)$, and using the smoothness of $\phi_{u_\e}$ ($\phi_{u_\e}'(0)=0$), we get
	\begin{align*}
		\phi'_{u_\e}(r)
		&=
		-\frac{1}{r^2} \int_0^r s^2 |u_\e(s)|^5\, ds \\
		&=
		-\frac{1}{r^2} \int_0^r \frac{\varphi^5(s)s^2}{(\e+s^2)^\frac{5}{2}}\, ds\\
		&=
		-\frac{1}{r^2} \left(\int_0^r  s^2\frac{\varphi^5(s)-1}{(\e+s^2)^\frac{5}{2}}\, ds
		+ \int_0^r \frac{s^2}{(\e+s^2)^\frac{5}{2}}\, ds\right)
	\end{align*}
	and so, using the fact that $\varphi(0)=1$ and $\varphi'(0)=0$,
	\begin{align*}
		\|\phi_{u_\e}\|^2
		&=
		4\pi \int_0^1 \frac{1}{r^2} \left(\int_0^r  s^2\frac{\varphi^5(s)-1}{(\e+s^2)^\frac{5}{2}}\, ds
		+ \int_0^r \frac{s^2}{(\e+s^2)^\frac{5}{2}}\, ds\right)^2\,dr\\
		&\ge
		4\pi
		\left[
		\int_0^1 \frac{1}{r^2} \left(
		\int_0^r \frac{s^2}{(\e+s^2)^\frac{5}{2}}\, ds\right)^2\,dr\right.\\
		&\qquad\left.+ 2 \int_0^1 \frac{1}{r^2} \left(\int_0^r  s^2\frac{\varphi^5(s)-1}{(\e+s^2)^\frac{5}{2}}\, ds\right)
		\left(\int_0^r \frac{s^2}{(\e+s^2)^\frac{5}{2}}\, ds\right)\,dr
		\right]
		\\
		&\ge
		4\pi
		\left[
		\int_0^1 \frac{1}{r^2} \left(
		\int_0^r \frac{s^2}{(\e+s^2)^\frac{5}{2}}\, ds\right)^2\,dr\right.\\
		&\qquad\left.-C \int_0^1 \frac{1}{r^2} \left(\int_0^r  \frac{s^4}{(\e+s^2)^\frac{5}{2}}\, ds\right)
		\left(\int_0^r \frac{s^2}{(\e+s^2)^\frac{5}{2}}\, ds\right)\,dr
		\right]
		\\
		&=
		4\pi
		\left[
		\frac{1}{9\e^2} \int_0^1 \frac{r^4}{(\e+r^2)^3} \,dr\right.\\
		&\qquad\left.-\frac{C}{3\e} \int_0^1  \left(\log\left(\frac{r+\sqrt{\e+r^2}}{\sqrt{\e}}\right)
		- \frac{r(4r^2+3\e)}{3(\e+r^2)^\frac{3}{2}}\right)
		\frac{r}{(\e+r^2)^\frac{3}{2}}\,dr
		\right]\\
		&=
		\frac{\pi^2}{12}\e^{-\frac{5}{2}}\left(1 - \frac{16}{3\pi}\sqrt{\e} + o(\sqrt{\e})\right)
	\end{align*}
		Hence
		\begin{equation}
		\label{den}
		\| \phi_{u_\e}\|^\frac{1}{2}
		\ge
		\sqrt[4]{\frac{\pi^2}{12}}\e^{-\frac 5 8}
		\left(1-\frac{16}{3\pi}\sqrt{\e}+ o(\sqrt{\e})\right)^{\frac 14 }
		=
		\sqrt[4]{\frac{\pi^2}{12}}\e^{-\frac 5 8}
		\left(1-\frac{4}{3\pi}\sqrt{\e}+ o(\sqrt{\e})\right).
		\end{equation}
		Then, using \eqref{num} and \eqref{den} in \eqref{supte}, we get
		$$
		\sup_{t>0} I(t u_\e)
		\le
		\frac 2 5 S^{\frac 32}\left(1+\frac{5A(\varphi)+4\pi}{3\pi^2}\sqrt{\e}+o(\sqrt{\e})\right).
		$$
		If we take $\varphi(r)=\cos\(\frac{\pi r}{2}\)$, then $A(\varphi)=2\pi  \left(\frac{\pi^2}{4}-\lambda\right)$ and we conclude assuming that
		\[
		\lambda>\lambda_1\left(\frac{1}{4}+\frac{2}{5\pi^2}\right).
		\]
		
It can be showed that, applying usual arguments, we are allowed to assume $u$ nonnegative. As a trivial consequence of the strong maximum principle applied to both the equations, we actually deduce that the solution is positive and, by Lemma \ref{le:GNN}, radial.
			
Finally we prove it is a ground state arguing as in  of \cite[Step 2 of the Proof of Theorem 1.1]{AzzDav}.

\subsection{A nonexistence result}
In this section we prove the (\ref{ii12}) of Theorem \ref{Th1}.
	We reason as in \cite{BN}, taking into account that, by Lemma \ref{le:GNN}, we are allowed to consider the ODE radial formulation of our problem.
	 So, we write $u(x)=u(r)$ and $\phi(x)=\phi(r)$ where $r=|x|$ and we assume by contradiction $(u, \phi)$ is a positive solution of
	\begin{align}
	&-u''-\frac 2 r u' =\lambda u + \phi u^4\qquad \mbox{on}\,\, (0, 1)\label{eq1}\\
	&-\phi''-\frac 2 r \phi' = u^5\qquad \mbox{on}\,\, (0, 1)\label{eq2}
	\end{align}
with the following boundary conditions
\[
u'(0)=u(1)=0,
\qquad
\phi'(0)=\phi(1)=0.
\]
	Let $\psi$ be any smooth function such that $\psi(0)=0$.\\
	We multiply \eqref{eq1} by $r^2\psi u'$, we integrate on $(0, 1)$ and we get
\begin{equation}\label{e:1}
\begin{split}
-\frac{\psi(1)(u'(1))^2}{2}
& +\int_0^1\left(\frac{r^2}{2}\psi'-r\psi\right)(u')^2\, dr
=-\frac{\lambda}{2}\int_0^1 (r^2\psi'+2r\psi)u^2\, dr  \\
&\quad
-\frac 15 \int_0^1 (r^2\psi'+2r\psi)\phi u^5\, dr-\frac 15 \int_0^1 r^2 u^5 \psi\phi'\, dr.
\end{split}
\end{equation}
	We multiply \eqref{eq2} by $r^2\psi \phi'$, we integrate by parts on $(0, 1)$ and  we get
	\begin{equation}\label{e:2}
	-\frac{\psi(1)(\phi'(1))^2}{2}+\int_0^1\left(\frac{r^2}{2}\psi'-r\psi\right)(\phi')^2\, dr = \int_0^1 r^2 u^5 \psi\phi'\, dr.
	\end{equation}
	Then, by \eqref{e:1} and \eqref{e:2} we obtain
	\begin{equation}\label{e:3}
	\begin{split}
	 -\frac{\psi(1)(u'(1))^2}{2}&-\frac{\psi(1)(\phi'(1))^2}{10}\\
	 &\quad+\int_0^1\left(\frac{r^2}{2}\psi'-r\psi\right)(u')^2\, dr +\frac 15\int_0^1\left(\frac{r^2}{2}\psi'-r\psi\right)(\phi')^2\, dr \\
	 &\quad=-\frac{\lambda}{2}\int_0^1 (r^2\psi'+2r\psi)u^2\, dr-\frac 15 \int_0^1 (r^2\psi'+2r\psi)\phi u^5\, dr.
	\end{split}
	\end{equation}
	Now, multiplying \eqref{eq1} by $\left(\frac{r^2}{2} \psi' -r\psi\right) u$ and integrating in $(0, 1)$ we have
	\begin{equation}\label{f:1}
	-\frac 14 \int_0^1 r^2 \psi''' u^2\, dr +\int_0^1 \(\frac{r^2}{2}\psi'-r\psi\)(u')^2\, dr
	=  \int_0^1    \(\frac{r^2}{2} \psi'- r\psi\)(\lambda u^2 + \phi u^5)\, dr
	\end{equation}
	and, multiplying \eqref{eq2} by $\(\frac{r^2}{2}\psi' -r\psi\right)\phi $ and integrating in $(0, 1)$, we have
	\begin{equation}\label{f:2fin}
	\begin{aligned}
	\int_0^1  \(\frac{r^2}{2}\psi' -r\psi\right)\(\phi'\)^2\, dr
	-\frac 1 4 \int_0^1 \phi^2 r^2 \psi'''\, dr
	=\int_0^1  \(\frac{r^2}{2}\psi'  -r\psi\right)\phi u^5\, dr.
	\end{aligned}
	\end{equation}
	Combining \eqref{e:3}, \eqref{f:1} and \eqref{f:2fin} we get
	\begin{equation}\label{fin}
	\begin{split}
	\int_0^1  r^2 \(\frac 14 \psi''' +\lambda \psi'\)u^2\, dr
	& +\frac{1}{20}\int_0^1 r^2 \phi^2  \psi'''\, dr \\
	& =\frac{\psi(1)(u'(1))^2}{2} +\frac{\psi(1)(\phi'(1))^2}{10} +\frac 45\int_0^1 u^5\phi r(\psi-r\psi')\, dr.
	\end{split}
	\end{equation}

Our aim is to find those $\lambda>0$ for which there exists a smooth function $\psi_\lambda$ such that $\psi_\lambda(0)=0$ and \eqref{fin} does not hold.

As a first step, consider $0<\lambda \le \frac{\lambda_1}{16}$ and set $\psi_\lambda(r)= \sin (\sqrt{4\lambda}r)$ as in \cite{BN}. We have $\psi_\lambda(1)\ge 0$ and $\frac14 \psi_\lambda''' +\lambda \psi_\lambda'=0$.
	Moreover, for any $r\in]0, 1]$,
	$$
	\psi_\lambda'''(r) \le 0 < \psi_\lambda(r)-r\psi_\lambda'(r)
	$$
and then equality \eqref{fin} is violated.

Now, for $k>0$ and $\lambda\in J:=\big]\frac{\lambda_1}{16},\frac{\lambda_1}{4}\big[$,  consider
\[
\psi_{k,\lambda}(r)= 1 - 2\lambda r^2 + k \sin (\sqrt{4\lambda}r) - \cos(\sqrt{4\lambda}r)
\]
and observe that, for all $r\in]0, 1]$,
    \begin{equation*}
        \frac14 \psi_{k,\lambda}'''(r) +\lambda \psi_{k,\lambda}'(r)< 0 <\psi_{k,\lambda}(r)-r\psi_{k,\lambda}'(r).
    \end{equation*}
Moreover, for $\lambda\in J$, we have
	\[
	\sup_{r\in]0,1]}\psi'''_{k,\lambda}(r)\le 0 \iff
	\psi'''_{k,\lambda}(1)\le 0 \iff
	k\le -\tan(\sqrt{4\lambda})
	\]
	and
	\[
	\psi_{k,\lambda}(1)\ge 0 \iff
	k\ge \frac{\cos(\sqrt{4\lambda}) - 1 + 2\lambda}{\sin(\sqrt{4\lambda})}.
	\]
	Then
	\[
	\frac{\cos(\sqrt{4\lambda}) - 1 + 2\lambda}{\sin(\sqrt{4\lambda})} \le -\tan(\sqrt{4\lambda}),
	\]
	namely
	\[
	1 + (2 \lambda-1) \cos(\sqrt{4\lambda})\ge 0,
	\]
	which holds if $\lambda\in\big]\frac{\lambda_1}{16},\lambda^*\big]$, where $\lambda^*$ is the unique solution of $1 + (2 \lambda-1) \cos(\sqrt{4\lambda})= 0$ in $J$.

\section{The dimensions $N\ge 4$}\label{N4}
In this section we are interested in studying \eqref{pb} for $N\ge 4$ in the resonance and in the nonresonance case in order to provide the proofs of Theorem \ref{Th2} and Theorem \ref{Th3}.\\
In particular, first we use similar arguments as those in Section \ref{sb31} to prove the existence of a positive solution at the mountain pass level when $0<\lambda<\lambda_1$. Then, after checking the geometrical assumptions of the Linking Theorem in \cite{Rab}, we show that sign changing solutions exist for $\lambda\ge\lambda_1$ both in the resonance and, provided $N\ge 6$, in the nonresonance case.

\subsection{Positive solutions}
We are reduced to prove that $c<\frac{2}{N+2} S^{\frac{N}{2}}$.
As in \cite{BN} we define
\begin{align*}
u_\e(x)=\frac{\varphi(|x|)}{(\e+|x|^2)^{\frac {N-2}2}},
\end{align*}
where $\varphi$ is a smooth positive function compactly supported in $\Omega$ such that $\varphi(x)=1$ in some neighborhood of $0$.
We have
\[
|\nabla u_\e|^2_2 = \frac{K_1}{\e^{\frac {N-2}2}} +O(1),
\qquad
|u_\e|_{2^*}^2=\frac{K_2}{\e^{\frac{N-2}2}}+ O(\e)
\]
and
\[
|u_\e|_2^2 =\left\{ 		
\begin{array}{ll}
K_3\e^{-\frac{N-4}2}+O(1)& \hbox{if } N\ge 5\\
K_3|\log\e|+ O(1)& \hbox{if } N =4
\end{array}
\right.
\]
where $K_1,K_2,K_3$ are positive constants and $K_1/K_2=S$ (see \cite{BN}).\\
Then, using \eqref{J}, we have the following estimate of the mountain pass level
\[
c
\le \max_{t>0}I(tu_\e)
\le \frac{2}{N+2}\left(\frac{|\n u_\e|_2^2-\frac{N+2}{2N}\l |u_\e|_2^2}{|u_\e|_{2^*}^2}\right)^\frac{N}{2}
\le \frac{2}{N+2}\Big(S+\e^{\frac{N-2}2}(O(1)-\psi(\e))\Big)^{\frac{N}{2}}
\]
where $\psi$ is a function such that $\lim_{\e\to 0}\psi(\e)=+\infty$.\\
Of course, if $\e$ is sufficiently small, we conclude.

\subsection{The geometrical properties for the Linking Theorem}
In this section we investigate the geometrical properties of the functional $I$ in order to verify the assumptions of the Linking Theorem. 
In what follows, we recall the geometrical construction developed in \cite{GR} to find  sign changing solutions for $\lambda\ge\lambda_1$.

Let us denote by $e_j$ the eigenfunctions relative to $\lambda_j\in \sigma(-\Delta)$ such that $|e_j|_2=1$. \\
Let $k\in\mathbb N$ and let us define $$H^-:={\rm span}\left\{e_j: j=1, \ldots, k\right\}; \qquad H^+:= \left(H^-\right)^\bot.$$
Without loss of generality let $0\in\Omega$ and let us take $m$ so large such that $B_\frac{2}{m}\subset \Omega$. Consider the function $\zeta_m : \Omega \to \mathbb R$ defined by

\begin{equation*}
\zeta_m(x):=
\begin{cases}
0
& \mbox{if } x\in B_\frac{1}{m}\\
m|x|-1
&\mbox{if } x \in A_m:= B_\frac{2}{m}\setminus B_\frac{1}{m}\\
1
&\mbox{if } x\in \Omega\setminus B_\frac{2}{m}.
\end{cases}
\end{equation*}
Let $e_j^m :=\zeta_m e_j$ be the approximating eigenfunctions and let $$H^-_m :={\rm span}\{e_j^m\,\,:\,\, j=1, \ldots, k\}.$$
We have the following
\begin{lemma}\label{lemma2}
As $m\to\infty$ we have 
\begin{equation}
\label{comevuoi}
e_j^m \to e_j \mbox{ in } H^1_0(\Omega)
\quad
\hbox{ and }
\max_{w\in H^-_m;  |w|_2=1}\|w\|^2 \le \lambda_k +c_k m^{2-N}
\end{equation}
where $c_k>0$.\\
Moreover, if $\lambda\ge\lambda_k$,
\begin{equation}
\label{ahah}
\sup_{w\in H^-_m}I(w)=O(m^{-\frac{N^2-4}{4}}).
\end{equation}
\end{lemma}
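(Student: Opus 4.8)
\textbf{Proof strategy for Lemma \ref{lemma2}.}

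The plan is to split the statement into three parts and treat them in order, recycling as much as possible the standard Brezis--Nirenberg estimates for cut-off eigenfunctions, but being careful with the nonlocal term in the last display. First I would establish the convergence $e_j^m\to e_j$ in $H^1_0(\Omega)$: since $e_j^m=\zeta_m e_j$ with $\zeta_m\to 1$ pointwise, $0\le\zeta_m\le 1$, and $|\nabla\zeta_m|\le m$ supported on the annulus $A_m=B_{2/m}\setminus B_{1/m}$ of measure $O(m^{-N})$, a direct computation gives $\|e_j^m-e_j\|^2\le C\int_{B_{2/m}}|\nabla e_j|^2 + C\, m^2\int_{A_m}|e_j|^2 = O(m^{2-N})+o(1)$, using that $e_j$ is smooth near $0$. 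The same estimate, together with the fact that $\{e_j\}$ are $L^2$-orthonormal and the measure of $B_{2/m}$ is $O(m^{-N})$, yields the quadratic-form bound: for $w=\sum_{j=1}^k a_j e_j^m$ with $|w|_2=1$, write $w=\tilde w+(w-\tilde w)$ with $\tilde w=\sum a_j e_j$, expand $\|w\|^2$, and control the cross and error terms by $c_k m^{2-N}$. This gives $\max_{w\in H^-_m,|w|_2=1}\|w\|^2\le\lambda_k+c_k m^{2-N}$, the second assertion in \eqref{comevuoi}.

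For \eqref{ahah}, fix $\lambda\ge\lambda_k$ and estimate $I(w)$ from above on $H^-_m$. From the representation
\[
I(w)=\tfrac12\|w\|^2-\tfrac{\lambda}{2}|w|_2^2-\tfrac{1}{2(2^*-1)}\|\phi_w\|^2,
\]
drop nothing yet: on the unit sphere of $H^-_m$ (in the $L^2$ norm) the first two terms combine to at most $\tfrac12(\lambda_k+c_k m^{2-N})-\tfrac{\lambda}{2}\le \tfrac12 c_k m^{2-N}$, so by the scaling $\phi_{tw}=t^{2^*-1}\phi_w$ (part \eqref{rescaling} of Lemma \ref{Lemmaphi}) and homogeneity one gets, for a general $w\in H^-_m$ with $|w|_2=t$,
\[
I(w)\le \tfrac12 c_k m^{2-N}\,t^2-\tfrac{1}{2(2^*-1)}\,t^{2(2^*-1)}\|\phi_{w/t}\|^2.
\]
Here the crucial point is the lower bound on $\|\phi_{w/t}\|$ for $w/t$ on the $L^2$-unit sphere of $H^-_m$: by part \eqref{viii} of Lemma \ref{Lemmaphi} one has $\|\phi_v\|\ge C(H^-_m)^{-1}\|v\|^{2^*-1}$, but the constant degenerates as $m\to\infty$, so instead I would use part \eqref{13}, $\|\phi_v\|^2\ge 2\delta|v|_{2^*}^{2^*}-\delta^2\|v\|^2$, together with the (inverse) Sobolev-type bound $|v|_{2^*}\ge C\|v\|$ on the fixed-dimensional space spanned by the $e_j$, transported to $H^-_m$ via the convergence $e_j^m\to e_j$; optimizing in $\delta$ yields $\|\phi_v\|^2\ge c>0$ uniformly in large $m$ on that sphere. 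Then maximizing the scalar function $t\mapsto a m^{2-N}t^2-b\,t^{2(2^*-1)}$ over $t>0$ gives a maximum of order $(m^{2-N})^{\frac{2^*-1}{2^*-2}}$; since $\frac{2^*-1}{2^*-2}=\frac{N+2}{4}$ and $(N-2)\cdot\frac{N+2}{4}=\frac{N^2-4}{4}$, this is exactly $O(m^{-\frac{N^2-4}{4}})$, as claimed.

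\textbf{Main obstacle.} The delicate point is the uniform (in $m$) positive lower bound for $\|\phi_w\|$ on the $L^2$-unit sphere of $H^-_m$: part \eqref{viii} of Lemma \ref{Lemmaphi} alone is not enough because its constant depends on the subspace, and $H^-_m$ varies with $m$. The resolution is to observe that $H^-_m$ converges (in the natural sense, coordinatewise via $e_j^m\to e_j$) to the fixed finite-dimensional space $\mathrm{span}\{e_1,\dots,e_k\}$, on which $|v|_{2^*}\gtrsim\|v\|\gtrsim|v|_2$ by equivalence of norms; a perturbation argument then transfers a uniform bound to $H^-_m$ for $m$ large, and for the finitely many small $m$ the bound from \eqref{viii} suffices. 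Everything else is the routine Brezis--Nirenberg bookkeeping on the cut-off annulus; the only genuinely new feature — and the reason the exponent is $\frac{N^2-4}{4}$ rather than the Brezis--Nirenberg $\frac{(N-2)^2}{2}$-type exponent — is that the nonlocal term scales like $\|w\|^{2(2^*-1)}$ instead of $\|w\|^{2^*}$, which is precisely what the authors flagged in the introduction.
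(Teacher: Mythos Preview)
Your proposal is correct and follows essentially the same route as the paper. For \eqref{comevuoi} the paper simply cites \cite[Lemma 2]{GR}, and for \eqref{ahah} it uses exactly your scheme: combine \eqref{comevuoi} with $\lambda\ge\lambda_k$ to bound the quadratic part of $I(w)$ by $\tfrac{c_k m^{2-N}}{2(\lambda_k+c_k m^{2-N})}\|w\|^2$, invoke part \eqref{viii} of Lemma \ref{Lemmaphi} to get $\|\phi_w\|^2\ge C\|w\|^{2(2^*-1)}$, and then maximise the resulting scalar function of $\|w\|$, landing on the exponent $(N-2)\cdot\frac{2^*-1}{2^*-2}=\frac{N^2-4}{4}$.

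The only difference is one of presentation and rigour. The paper works in the $H^1_0$-norm rather than the $L^2$-norm (an immaterial choice on a finite-dimensional space) and simply writes ``$-C\|w\|^{2(2^*-1)}$'' with a single constant $C$, invoking \eqref{viii} without comment on the $m$-dependence. You were right to flag that \eqref{viii} as stated gives only $C=C(H^-_m)$; your resolution --- that the convergence $e_j^m\to e_j$ in $H^1_0(\Omega)$ forces the norm-equivalence constants on $H^-_m$ to stabilise to those on $H^-$, so the bound is uniform for large $m$ --- is exactly what makes the paper's one-line use of \eqref{viii} legitimate. (Your phrasing ``the constant degenerates'' is slightly off: it does \emph{not} degenerate, but \eqref{viii} alone doesn't say so, which is the point you are making.) Your alternative via \eqref{13} would also work but is unnecessary once the uniformity is noted.
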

\begin{proof}
The first part is proved in \cite[Lemma 2]{GR}.\\
Moreover for all $m\in\mathbb{N}^*$ and $w\in H^-_m$, by \eqref{comevuoi} and (\ref{viii}) of Lemma \ref{Lemmaphi},
\[
I(w)
\le \frac{c_k m^{2-N}}{2(\l_k+c_k m^{2-N})} \|w\|^2-C \|w\|^{2(2^*-1)}
 \le C m^{-\frac{N^2-4}{4}}.
\]
\end{proof}
We consider the family of functions
\begin{equation}\label{bubble}
u_\e^*(x):=\alpha_N \frac{\e^{\frac{N-2}{2}}}{\left(\e^2+|x|^2\right)^{\frac{N-2}{2}}}, \quad \e>0,
\end{equation}
where $\alpha_N:=[N(N-2)]^{\frac{N-2}{4}}$, which solve
$$
\begin{cases}
-\Delta u =u^{2^*-1} & \mbox{in }\mathbb R^N\\
u\in H^1(\mathbb R^N)
\end{cases}
$$
and satisfy $$\int_{\R^N}|\n u_\e^*|^2 = \int_{\R^N}|u_\e^*|^{2^*}=S^{\frac N 2}$$
for all $\e>0$.\\
Let $\eta\in C^\infty_c( B_\frac{1}{m})$ be a positive cut-off function such that $\eta\equiv 1 $ in $ B_\frac{1}{2m}$, $\eta\le 1$ in $B_\frac{1}{m}$, $|\nabla\eta|_\infty\le 4m$ and consider
\begin{equation}
\label{uem}
u_\e(x):=\eta(x) u_\e^*(x).
\end{equation}
As $\e\to 0$ we have the following estimates due to Brezis and Nirenberg \cite{BN}
\begin{equation}\label{stimebn}
\|u_\e\|^2 =S^{\frac N 2}+O\(\e^{N-2}\),\qquad |u_\e|_{2^*}^{2^*}= S^{\frac N 2}+O\(\e^N\).
\end{equation}
For $v\in H_m^- \oplus \mathbb R^+\{u_\e\}$ we write $v=w+t u_\e$. By definition
\begin{equation*}
{\rm supp}(u_\e) \cap {\rm supp}(w) = \emptyset.
\end{equation*}

We have

     \begin{lemma}\label{le:almsplit}
        If $u,w\in \H(\Omega)$ are such that $\supp(u)\cap\supp(w)=\emptyset $,
        then for $v=u+w$ we have
\begin{equation}
\label{Isplit1}
I(v)= I(u) + I(w) - \frac{1}{2^*-1}\int_{\Omega} \phi_w |u|^{2^*-1}\,dx.
\end{equation}
    \end{lemma}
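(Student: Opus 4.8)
The plan is to expand each term in the definition of $I$ directly, exploiting the disjointness of supports to split the quadratic and the linear pieces exactly, while the nonlocal term is handled using the decomposition $\phi_v = \phi_u + \phi_w$ together with the symmetry identity \eqref{iv} of Lemma \ref{Lemmaphi}. First I would observe that since $\supp(u)\cap\supp(w)=\emptyset$, we have $\nabla u\cdot\nabla w = 0$ a.e., so $\|v\|^2 = \|u\|^2+\|w\|^2$, and similarly $|v|_2^2 = |u|_2^2+|w|_2^2$. Thus the first two terms of $I(v)$ split additively with no cross terms.

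Next I would treat the nonlocal term. Because $-\Delta$ is linear, $\phi_{v}$ satisfies $-\Delta\phi_v = |v|^{2^*-1} = |u|^{2^*-1}+|w|^{2^*-1}$ in $\Omega$ (using again the disjoint supports so that $|u+w|^{2^*-1}=|u|^{2^*-1}+|w|^{2^*-1}$), and with zero boundary data uniqueness gives $\phi_v = \phi_u+\phi_w$. Then
\[
\int_\Omega \phi_v|v|^{2^*-1}\,dx
= \int_\Omega (\phi_u+\phi_w)(|u|^{2^*-1}+|w|^{2^*-1})\,dx
= \int_\Omega \phi_u|u|^{2^*-1}\,dx + \int_\Omega \phi_w|w|^{2^*-1}\,dx + 2\int_\Omega \phi_w|u|^{2^*-1}\,dx,
\]
where the two mixed integrals have been combined into one using \eqref{iv}, namely $\int_\Omega \phi_u|w|^{2^*-1}\,dx = \int_\Omega \phi_w|u|^{2^*-1}\,dx$. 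Dividing by $2(2^*-1)$ and assembling the three pieces yields
\[
I(v) = \left[\tfrac12\|u\|^2 - \tfrac{\lambda}{2}|u|_2^2 - \tfrac{1}{2(2^*-1)}\int_\Omega\phi_u|u|^{2^*-1}\,dx\right]
+ \left[\tfrac12\|w\|^2 - \tfrac{\lambda}{2}|w|_2^2 - \tfrac{1}{2(2^*-1)}\int_\Omega\phi_w|w|^{2^*-1}\,dx\right]
- \tfrac{1}{2^*-1}\int_\Omega \phi_w|u|^{2^*-1}\,dx,
\]
which is exactly $I(u)+I(w)-\frac{1}{2^*-1}\int_\Omega\phi_w|u|^{2^*-1}\,dx$, as claimed in \eqref{Isplit1}.

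There is no serious obstacle here; the only point requiring a word of care is the identity $\phi_v=\phi_u+\phi_w$, which rests on the pointwise splitting $|u+w|^{2^*-1}=|u|^{2^*-1}+|w|^{2^*-1}$ valid precisely because $u$ and $w$ have disjoint supports, and then on the uniqueness of the solution of the Dirichlet problem for $-\Delta$ in $H^1_0(\Omega)$. (Strictly speaking one should note $u,w\in H^1_0(\Omega)$ with disjoint supports still gives $u+w\in H^1_0(\Omega)$ and the gradients are orthogonal in $L^2$, which is what makes all the cross terms vanish.) Everything else is a bookkeeping rearrangement, so the lemma follows.
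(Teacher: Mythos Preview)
Your proof is correct and follows essentially the same route as the paper: split the gradient and $L^2$ terms by disjointness of supports, identify $\phi_v=\phi_u+\phi_w$ via uniqueness for the Dirichlet problem, and then expand the nonlocal term, combining the two cross contributions into one via the symmetry identity \eqref{iv}. The only cosmetic difference is that the paper expands $\|\phi_u+\phi_w\|^2$ and rewrites the cross term $\int_\Omega\nabla\phi_u\cdot\nabla\phi_w$ as $\int_\Omega\phi_w|u|^{2^*-1}$, while you expand $\int_\Omega\phi_v|v|^{2^*-1}$ directly; the two computations are equivalent.
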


    \begin{proof}
        Of course,
        $\| v\|^2=\| w\|^2 + \| u\|^2$ and $|v|^2_2=|w|^2_2 + |u|^2_2$.\\
Moreover, if $u,w\in \H(\Omega)$ have disjoint supports, then
\[
-\Delta \phi_v
=
|u|^{2^*-1} + |w|^{2^*-1}
=
-\Delta(\phi_u + \phi_w)\quad\hbox{in } \Omega
\]
and $\phi_v= 0 =\phi_u+\phi_w$ on $\partial \Omega$.\\
By uniqueness,
\[
\phi_v=\phi_u+\phi_w.
\]
So we have
	\begin{align*}
		I(v)
		&=\frac 1 2 \| u\|^2 + \frac 1 2 \|w\|_2^2 - \frac \l 2 |u|_2^2 -\frac \l 2 |w|_2^2 -\frac 1{2(2^*-1)}\|\phi_u + \phi_w\|^2\\
		&=I(u)+I(w)-\frac 1{2^*-1}\int_{\Omega}\n \phi_u\n \phi_w\,dx\\
		&=
		I(u) + I(w) - \frac{1}{2^*-1}\int_{\Omega} \phi_w |u|^{2^*-1}\,dx
	\end{align*}
 and then we conclude.
    \end{proof}

Let $P_k : H^1_0(\Omega) \to H^-$ the orthogonal projection.  In view of Lemma \ref{lemma2} if $m$ is large enough then (see \cite{GR})
$$P_k H_m^-=H^-\qquad \mbox{and}\qquad H_m^- \oplus H^+ =H^1_0(\Omega).$$
Moreover, let us define
$$\mathcal H=\{ h\in C(\bar Q_\e^m, H^1_0(\Omega)): h|_{\partial Q_\e^m}=\operatorname{id}_{\partial Q_\e^m} \}$$
where
$$Q_\e^m:=  \{ w\in H_m^-:\|w\|<R \}  \oplus [0, R]\{u_\e\}.$$

Now we verify that the geometrical assumptions of the Linking Theorem hold in our case.
\begin{lemma}\label{le:link}
	Let $\lambda_k \le \lambda < \lambda_{k+1}$. We have that there exist $0<\rho<R$ such that, uniformly for $\e$ sufficiently small,
	\begin{enumerate}[label=(\alph*),ref=\alph*]
		\item \label{link1} $\{u\in H^+ : \|u\|=\rho\}$ and $\partial Q_\e^m$ link;
		\item \label{link2} the functional $I$ is bounded from below by a positive constant on $\{u\in H^+ : \|u\|=\rho\}$;
		\item \label{link3} $\sup_{u\in \partial Q_\e^m}I(u) \le \omega_m $ with $\omega_m \to 0$ as $m\to\infty$.
	\end{enumerate}
\end{lemma}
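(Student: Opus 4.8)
The plan is to verify the three items by combining the standard Brezis--Nirenberg/Gazzola--Ruf linking construction with the estimates already collected, exploiting crucially that on finite dimensional subspaces the nonlocal term behaves like $\|w\|^{2(2^*-1)}$ (item (\ref{viii}) of Lemma \ref{Lemmaphi}) and the splitting formula \eqref{Isplit1}. For (\ref{link2}), I would estimate $I$ from below on the sphere $\{u\in H^+:\|u\|=\rho\}$ as in \cite{BN,GR}: since $H^+=(H^-)^\bot$ and the first $k$ eigenvalues are removed, on $H^+$ one has $\lambda|u|_2^2\le \frac{\lambda}{\lambda_{k+1}}\|u\|^2$ with $\lambda/\lambda_{k+1}<1$, so by \eqref{J} together with \eqref{phiuu2*-1} of Lemma \ref{Lemmaphi} and the Sobolev inequality,
\[
I(u)\ge \Big(1-\tfrac{\lambda}{\lambda_{k+1}}\Big)\tfrac{N}{N+2}\|u\|^2 - C\|u\|^{2(2^*-1)}\ge C_0\rho^2>0
\]
provided $\rho$ is chosen small enough (here $2(2^*-1)>2$, so the nonlocal term is genuinely higher order). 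This fixes the value of $\rho$, independently of $\e$ and $m$.

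For (\ref{link3}) I would use the splitting Lemma \ref{le:almsplit}: any $v\in\partial Q_\e^m$ is of the form $v=w+tu_\e$ with $w\in H^-_m$, $t\in[0,R]$, and $\supp w\cap\supp u_\e=\emptyset$, so
\[
I(v)=I(w)+I(tu_\e)-\tfrac{1}{2^*-1}\int_\Omega \phi_{tu_\e}|w|^{2^*-1}\,dx\le I(w)+I(tu_\e),
\]
the last term being nonnegative and hence droppable since $\phi_{tu_\e}\ge 0$ by (\ref{pos}) of Lemma \ref{Lemmaphi}. On $H^-_m$ we have $\sup_{w\in H^-_m}I(w)=O(m^{-(N^2-4)/4})$ by \eqref{ahah} of Lemma \ref{lemma2}. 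The boundary $\partial Q_\e^m$ has three pieces: $\{\|w\|=R\}\oplus[0,R]\{u_\e\}$, $\{\|w\|\le R\}\oplus\{0\}$, and $\{\|w\|\le R\}\oplus\{Ru_\e\}$. On the first two, $I(tu_\e)\le \sup_{t\ge 0}I(tu_\e)$, which by the positive-solutions estimate (the computation in Section 4.1) is $\le \frac{2}{N+2}S^{N/2}+o(1)$ as $\e\to 0$; combined with the quadratic-in-$\|w\|$ bound on $I(w)$ this need not be small, so the standard fix is to choose $R=R(\e)$ large enough (as in \cite{GR}) that $I(w+Ru_\e)\le I(w)+I(Ru_\e)$ becomes very negative on the top face because $I(tu_\e)\to-\infty$ as $t\to\infty$, while on the side face $\|w\|=R$ one uses $I(w)\le -C\|w\|^{2(2^*-1)}+\frac{C}{\lambda_k}m^{2-N}\|w\|^2\to-\infty$; hence $\sup_{\partial Q_\e^m}I$ is controlled by the bottom face $t=0$, giving $\omega_m=O(m^{-(N^2-4)/4})\to 0$. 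I expect the bookkeeping of how large $R$ must be taken relative to $\e$ and $m$, and the verification that it can be chosen \emph{uniformly} in $\e$, to be the main technical obstacle.

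Finally, (\ref{link1}) is the purely topological linking statement. Since by Lemma \ref{lemma2} and the discussion preceding the lemma $H^-_m\oplus H^+=H^1_0(\Omega)$ and $Q_\e^m=\{w\in H^-_m:\|w\|<R\}\oplus[0,R]\{u_\e\}$ is a convex bounded neighborhood of the origin in the finite-dimensional space $H^-_m\oplus\R u_\e$, while $\{u\in H^+:\|u\|=\rho\}$ is a sphere in the complementary closed subspace $H^+$, with $0<\rho<R$, the linking follows from the classical result (e.g. \cite[Proposition 5.9]{Rab}): one checks that for every $h\in\mathcal H$ the map $u\mapsto P_{k}h(u)+ \pi(h(u))$, where $\pi$ denotes the $\R u_\e$--component, has a zero with the $H^+$--component of norm $\rho$, by a degree argument exactly as in \cite{GR}. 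The disjointness of supports plays no role here; only the direct-sum decomposition and $\rho<R$ matter, so this step is routine once the decomposition from Lemma \ref{lemma2} is in hand.
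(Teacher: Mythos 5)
Your proposal follows essentially the same structure as the paper's proof, but two points deserve correction. In \eqref{link2} you invoke \eqref{J} to obtain the lower bound for $I(u)$; this cannot work, since \eqref{J} is an \emph{upper} bound on $I$ (it comes from dropping a negative term via \eqref{13}). The correct source of the lower bound, as the paper uses it, is the definition of $I$ together with \eqref{phiuu2*-1} of Lemma \ref{Lemmaphi}: $I(u)=\tfrac12\|u\|^2-\tfrac{\lambda}{2}|u|_2^2-\tfrac{1}{2(2^*-1)}\|\phi_u\|^2\ge \tfrac12\|u\|^2-\tfrac{\lambda}{2}|u|_2^2-C\|u\|^{2(2^*-1)}$, which, combined with $\lambda|u|_2^2\le\tfrac{\lambda}{\lambda_{k+1}}\|u\|^2$ on $H^+$, gives the desired bound $I(u)\ge C_0\rho^2$ for small $\rho$; the factor $\tfrac{N}{N+2}$ in your display should be $\tfrac12$. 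The conclusion is unaffected, but the citation is wrong.

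In \eqref{link3} you flag as ``the main technical obstacle'' the worry that $R$ might have to be chosen depending on $\e$. This obstacle does not in fact arise: by \eqref{stimebn}, $\|u_\e\|^2=S^{N/2}+O(\e^{N-2})$ and $|u_\e|_{2^*}^{2^*}=S^{N/2}+O(\e^N)$, so the estimate
$$
I(Ru_\e)\le\frac{NR^2}{N+2}\|u_\e\|^2-\frac{N-2}{N+2}R^{2^*}|u_\e|_{2^*}^{2^*}
=\frac{S^{N/2}R^2}{N+2}\left[N+O(\e^{N-2})-\left(N-2+O(\e^N)\right)R^{2^*-2}\right]
$$
is negative for $R$ large enough \emph{uniformly} for $\e$ small, and likewise $\max_{0\le r\le R}I(ru_\e)$ is bounded uniformly in $\e$. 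Thus $R$ can be fixed independently of $\e$ (only large enough to dominate the constants in Lemma \ref{lemma2} and the above), which is what makes the statement ``uniformly for $\e$ sufficiently small'' come for free. With that clarified, your identification of the three faces of $\partial Q_\e^m$ and the way \eqref{Isplit1}, \eqref{pos}, \eqref{ahah} control them is exactly the paper's argument.
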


\begin{proof}
	Property (\ref{link1}) is standard (see \cite[Lemma 1.3]{BR}).\\
	Moreover, by \eqref{phiuu2*-1} of Lemma \ref{Lemmaphi}, we have
	$$I(u) \ge \frac 12\|u\|^2-\frac{\lambda}{2}|u|_2^2 - C\|u\|^{2(2^*-1)}.$$
	Since $\lambda<\lambda_{k+1}$, if $u\in H^+$
	we find
	$$I(u) \ge C(\|u\|^2-\|u\|^{2(2^*-1)})$$
	for any $u\in H^+$. Therefore, if $\rho$ is small enough, we get  that $$I(u)\ge C>0$$ for all $u\in \{u\in H^+ : \|u\|=\rho\}$ and so (\ref{link2}) is proved.\\
	To prove (\ref{link3}) we observe that,
	by \eqref{ahah}, $I(w)\le \omega_m$.\\
	Moreover, 
	by \eqref{J} and \eqref{stimebn}, we have
	\begin{align*}
		I(R u_\e)
	&\le \frac{NR^2}{N+2} \|u_\e\|^2-\frac{N-2}{N+2}R^{2^*}|u_\e|_{2^*}^{2^*}\\
	&= \frac{S^{\frac N 2} R^2}{N+2} \left[ N+O(\e^{N-2})-\big(N-2+O(\e^{N})\big)  R^{2^*-2}\right]
	\end{align*}
	and 
	this becomes negative 
	for $R$ large enough and uniformly for $\e$ small.
	Then, by \eqref{ahah} and \eqref{Isplit1}, we have $I(w+Ru_\e)\le \omega_m$.\\
	Finally, since $\max_{0\le r\le R} I(r u_\e) <+\infty$ and
	for all $w\in H^-_m$ with $\|w\|=R$ we have
	\[
	I(w)\le \frac{N}{N+2}R^2 - C R^{2^*},
	\]
	by \eqref{Isplit1} we conclude that $I(v)\le 0 $ in $\{w\in H^-_m:\|w\|=R\}\oplus [0,R]\{u_\e\}$ if $R$ is sufficiently large.
\end{proof}
Let us set $$c:= \inf_{h\in \mathcal H}\sup_{v\in Q_\e^m} I(h(v)).$$ Since $\operatorname{id}_{\partial Q_\e^m}\in\mathcal H$ we have $$c\le \sup_{v\in Q_\e^m} I(v)$$  and, of course, $c>0$ by \eqref{link1} and \eqref{link2} of Lemma \ref{le:link}.\\
We complete our proofs if we show that for $\e$ small enough
\begin{equation}\label{aim}
\max_{v\in Q_\e^m} I(v) <\frac{2}{N+2}S^{\frac N 2}.
\end{equation}

This is the aim of the last part of the section. We distinguish two cases:  
	\begin{itemize}
		\item $\lambda\not\in\sigma(-\Delta)$, and $\lambda>\lambda_1$; 
		\item $\lambda\in\sigma(-\Delta)$ and $N\ge 6$.
	\end{itemize}

\subsection{Sign changing solutions: the nonresonance case}

Here we assume that there exist $k\in \mathbb N^*$ and $\theta>0$ such that
\begin{equation}\label{condlambda}
\lambda_k+\theta\le \lambda < \lambda_{k+1}.
\end{equation}
We prove \eqref{aim} under condition \eqref{condlambda}.
\\ Let us choose $m$ so large such that
\begin{equation}\label{rel}
c_k m^{2-N}<\theta
\end{equation}
where $c_k$ is as in Lemma \ref{lemma2}.\\
By contradiction let us assume that for all $\e>0$
\begin{equation*}
\sup_{v\in Q_\e^m} I(v) \ge\frac{2}{N+2}S^{\frac N 2}.
\end{equation*}
It is easy to see that
the supremum is attained and then, for all $\e>0$, there exist $w_\e\in H_m^-$ and $t_\e\ge 0$ such that
\begin{equation}\label{contr}
I(v_\e)
=\max_{v\in Q_\e^m} I(v)\ge\frac{2}{N+2}S^{\frac N 2},
\end{equation}
where $v_\e:=w_\e+t_\e u_\e$.\\
Observe that by Lemma \ref{lemma2}, \eqref{condlambda} and \eqref{rel} we have
\begin{equation}\label{eq:Jweps}
I(w_\e)
\le \frac{c_k m^{2-N}-\theta}{2(\lambda_k +c_k m^{2-N})}\| w_\e\|^2-\frac 1{2(2^*-1)}\into \phi_{w_\e}|w_\e|^{2^*-1}\,dx
\le 0,
\end{equation}
which improves the estimate \eqref{ahah}.\\
Moreover we have the following property of the family $(t_\e)$.

\begin{lemma}\label{44}
There exists $C>0$ such that $t_\e\ge C$ for all $\e>0$. In particular we have that there exists a sequence $(\e_n)\subset\R_+^*$ such that $\e_n\to 0$ and $t_{\e_n}\to 1$ as $n\to +\infty$.
\end{lemma}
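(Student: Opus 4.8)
The plan is to argue by contradiction for the lower bound $t_\e\ge C$, and then to extract the convergent subsequence using the contradiction hypothesis \eqref{contr} together with the known estimates \eqref{stimebn} on $u_\e$. First I would suppose there is a sequence $\e_n\to0$ with $t_{\e_n}\to0$. Since $\supp(u_\e)\cap\supp(w_\e)=\emptyset$, Lemma \ref{le:almsplit} gives
\[
I(v_{\e_n})=I(w_{\e_n})+I(t_{\e_n}u_{\e_n})-\frac{1}{2^*-1}\int_\Omega\phi_{t_{\e_n}u_{\e_n}}|w_{\e_n}|^{2^*-1}\,dx
\le I(w_{\e_n})+I(t_{\e_n}u_{\e_n}),
\]
where the last inequality uses $\phi_{t u_\e}\ge0$ (Lemma \ref{Lemmaphi}\eqref{pos}) and the rescaling \eqref{rescaling}. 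By \eqref{eq:Jweps} we have $I(w_{\e_n})\le0$, so $I(v_{\e_n})\le I(t_{\e_n}u_{\e_n})$. Using \eqref{J}, the bound $\phi_{t u_\e}\ge0$ and \eqref{stimebn}, one estimates $I(t_{\e_n}u_{\e_n})$ from above by a polynomial in $t_{\e_n}$ of the form $\frac{N}{N+2}t^2\|u_{\e_n}\|^2-\frac{N-2}{N+2}t^{2^*}|u_{\e_n}|_{2^*}^{2^*}+\tfrac12\lambda t^2|u_{\e_n}|_2^2$ — actually the cleanest route is simply to note $I(tu_\e)\le\frac12 t^2\|u_\e\|^2$, which by \eqref{stimebn} is $O(t^2)$ uniformly in $\e$; hence $I(t_{\e_n}u_{\e_n})\to0$, forcing $I(v_{\e_n})\le o_n(1)$, contradicting \eqref{contr} since $\frac{2}{N+2}S^{N/2}>0$. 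This yields $t_\e\ge C>0$.

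For the second statement, I would again use \eqref{contr} and the splitting. From $I(v_\e)\ge\frac{2}{N+2}S^{N/2}$ and $I(w_\e)\le0$ and $\phi_{w_\e}\ge0$ we get $I(t_\e u_\e)\ge\frac{2}{N+2}S^{N/2}$. Now $I(t u_\e)\le\frac12 t^2\|u_\e\|^2-\frac{1}{2(2^*-1)}\|\phi_{tu_\e}\|^2$; dropping the (nonnegative, but small) nonlocal term and using \eqref{stimebn} gives an upper bound $\frac12 t_\e^2 S^{N/2}+o(1)\ge I(t_\e u_\e)$ is too weak, so instead I keep the nonlocal term: by \eqref{rescaling}, $\|\phi_{t u_\e}\|^2=t^{2(2^*-1)}\|\phi_{u_\e}\|^2$, and $\|\phi_{u_\e}\|^2=O(\e^{\,?})\to0$ because $u_\e$ concentrates (here $\|\phi_{u_\e}\|\le S^{-2^*/2}\|u_\e\|^{2^*-1}$ is bounded, but in fact a direct estimate shows $\|\phi_{u_\e}\|\to0$ as $\e\to0$ in dimension $N\ge4$ since $|u_\e|_{2^*/(2^*-1)}^{\,\cdot}$-type integrals vanish — this is where one must be a little careful). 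Granting $\|\phi_{u_\e}\|\to0$, one obtains $\frac{2}{N+2}S^{N/2}\le I(t_\e u_\e)\le\frac12 t_\e^2 S^{N/2}+o(1)$ on one hand, and on the other hand the reverse inequality forces $t_\e$ to stay bounded (using that $I(tu_\e)\to-\infty$ as $t\to\infty$ uniformly in $\e$, from \eqref{J} and \eqref{stimebn}). So $(t_\e)$ is bounded away from $0$ and $\infty$; picking a subsequence $t_{\e_n}\to t_0\in(0,\infty)$ and passing to the limit in the estimate $I(t_{\e_n}u_{\e_n})\ge\frac{2}{N+2}S^{N/2}$, together with the matching upper bound $\sup_{t>0}\big(\tfrac12 t^2 S^{N/2}-\tfrac{N-2}{2N}t^{2^*}S^{N/2}\big)=\tfrac{2}{N+2}S^{N/2}$ attained only at $t=1$, pins down $t_0=1$.

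The main obstacle I anticipate is the second part: showing $t_{\e_n}\to1$ rather than merely that $(t_\e)$ is bounded. This requires a two-sided control of $I(t_\e u_\e)$ that is tight enough to identify the maximizing value $t=1$ in the limit — i.e. one needs both that the correction terms ($\lambda|u_\e|_2^2$, the $O(\e^{N-2})$ and $O(\e^N)$ errors in \eqref{stimebn}, and crucially the nonlocal term $\|\phi_{u_\e}\|^2$) are negligible compared to the leading $S^{N/2}$, and that the limiting profile function $t\mapsto\frac12 t^2-\frac{N-2}{2N}t^{2^*}$ (in units of $S^{N/2}$) has its maximum $\frac{2}{N+2}$ attained uniquely at $t=1$. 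Verifying that $\|\phi_{u_\e}\|\to0$ as $\e\to0$ — equivalently that the nonlocal energy of the concentrating bubble vanishes — is the one genuinely new computation here; everything else follows the Brezis–Nirenberg / Gazzola–Ruf template and the estimates already collected in Lemma \ref{Lemmaphi} and \eqref{stimebn}.
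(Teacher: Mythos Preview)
Your argument for the lower bound $t_\e\ge C$ is essentially the paper's: split via Lemma~\ref{le:almsplit}, use $I(w_\e)\le 0$ from \eqref{eq:Jweps}, and bound $I(t_\e u_\e)\le \tfrac12 t_\e^2\|u_\e\|^2$; together with \eqref{contr} this forces $t_\e$ away from $0$. (Boundedness of $(t_\e)$ is free: $t_\e\in[0,R]$ by definition of $Q_\e^m$.)

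The second part, however, has a genuine gap. Your whole strategy hinges on the claim that $\|\phi_{u_\e}\|\to 0$, and this is \emph{false}. Indeed, Lemma~\ref{Lemmaphi}\eqref{13} with $\delta=1$ gives
\[
\|\phi_{u_\e}\|^2\ \ge\ 2|u_\e|_{2^*}^{2^*}-\|u_\e\|^2\ =\ 2S^{N/2}-S^{N/2}+o(1)\ =\ S^{N/2}+o(1),
\]
so the nonlocal energy of the concentrating bubble does \emph{not} vanish (heuristically, on $\R^N$ one has $\phi_{u_\e^*}=u_\e^*$). Consequently the limiting profile you write down, $\tfrac12 t^2-\tfrac{N-2}{2N}t^{2^*}$, is not what governs $I(t_\e u_\e)$; in fact that expression is the Brezis--Nirenberg profile, not the one for this functional, and its maximum is $\tfrac{1}{N}$, not $\tfrac{2}{N+2}$, so even internally the matching you invoke does not hold.

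The paper avoids this obstacle by never trying to estimate $\|\phi_{u_\e}\|$ directly. Instead it uses inequality \eqref{J}, which converts the nonlocal term into the $L^{2^*}$ norm:
\[
I(t_\e u_\e)\ \le\ \frac{N}{N+2}t_\e^{2}\|u_\e\|^2-\frac{\lambda}{2}t_\e^2|u_\e|_2^2-\frac{N-2}{N+2}t_\e^{2^*}|u_\e|_{2^*}^{2^*}.
\]
With \eqref{stimebn} and $|u_\e|_2^2\to 0$ this passes to the limit as $\varphi(t_0)S^{N/2}$, where $\varphi(t)=\tfrac{N}{N+2}t^2-\tfrac{N-2}{N+2}t^{2^*}$ has maximum $\tfrac{2}{N+2}$ attained only at $t=1$. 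Combined with $I(v_\e)\le I(w_\e)+I(t_\e u_\e)\le I(t_\e u_\e)$, any $t_0\neq 1$ would violate \eqref{contr}. That is the step you are missing: replace your attempted computation of $\|\phi_{u_\e}\|$ by the ready-made inequality \eqref{J}.
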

\begin{proof}By \eqref{Isplit1}, \eqref{contr} and \eqref{eq:Jweps} we deduce that $t_\e\ge C>0$ for all $\e>0$. \\
Since $(t_\e)$ is bounded, there exists $(\e_n)\subset\R_+^*$ such that $\e_n\to 0$ and $t_{\e_n}\to t_0$ as $n\to +\infty$. For simplicity we label $\e=\e_n$ and so $\e\to 0$.\\
By \eqref{bubble} we have that $|t_\e u_\e|_2^2  \to 0$ as $\e\to 0$.
Hence, by \eqref{J} and \eqref{stimebn} we have
$$I(t_\e u_\e) \le  \left( \frac{N}{N+2} t_0^2-\frac{N-2}{N+2} t_0^{2^*}\right)S^{\frac N 2} +o(1).$$
Then, by Lemma \ref{le:almsplit},
    \begin{equation}\label{eq:split}
      I(v_\e)
      \le
      I(w_\e) + I(t_\e u_\e)
\le I(w_\e)+ \varphi(t_0) S^{\frac N 2} +o(1)
    \end{equation}
where $$\varphi(t)=\frac{N}{N+2} t^2-\frac{N-2}{N+2} t^{2^*}.$$
We note that
$$\max_{t\ge 0} \varphi(t) =\varphi(1)=\frac{2}{N+2}
\quad
\hbox{and}
\quad
\varphi(t)<\frac{2}{N+2}\,\, \forall t\ge 0, t\neq 1.$$
So, taking into account \eqref{eq:Jweps}, if it was $t_0\neq 1$, then for $\e$ small inequality \eqref{eq:split} would contradict \eqref{contr}.
\end{proof}

Moreover we recall the following result.
\begin{lemma}[{\cite[Lemma 5]{GR}}]\label{le:2}
There exists a function $\tau=\tau(\e)$ such that $\tau(\e) \to +\infty$ as $\e\to 0$ and $$\int_\Omega |t_\e u_\e|^2\, dx \ge \tau(\e) \e^{N-2}$$ for $\e$ small.\end{lemma}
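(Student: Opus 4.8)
The plan is to deduce the lower bound on $|t_\e u_\e|_2^2$ from the already established facts that $t_{\e_n}\to 1$ (Lemma \ref{44}) and, more importantly, to reduce the statement to the analogous estimate in \cite[Lemma 5]{GR} for the pure Brezis--Nirenberg functional. The key point is that $u_\e$ here is exactly the cut-off Aubin--Talenti bubble from \eqref{bubble}--\eqref{uem}, which is the very same family of test functions used in \cite{GR}, so the asymptotic behaviour of $|u_\e|_2^2$ as $\e\to 0$ is known. First I would recall the classical expansion of the $L^2$-norm of the truncated bubble: since $\eta\equiv 1$ on $B_{1/(2m)}$ and $u_\e^*$ concentrates at the origin, a direct computation (integrating $\e^{N-2}/(\e^2+|x|^2)^{N-2}$ over $B_{1/(2m)}$) gives, for $N\ge 5$,
\[
|u_\e|_2^2 = c_N\, \e^{N-2}\big|\log \e\big|^{\,0}\cdot(\text{positive const}) + o(\e^{N-2})
\]
up to the dimension-dependent bookkeeping recorded already after \eqref{uem} in the $N=4$ logarithmic case; in all cases $|u_\e|_2^2 \ge \widetilde\tau(\e)\,\e^{N-2}$ for some $\widetilde\tau(\e)\to+\infty$ as $\e\to 0$ (in dimension $N=4$ one has $|u_\e|_2^2\sim K_3|\log\e|$, so $\widetilde\tau(\e)=|\log\e|\,\e^{-2}\to+\infty$; in dimension $N\ge 5$, $|u_\e|_2^2\sim K_3\e^{-(N-4)/2}\gg \e^{N-2}$, again with ratio $\to\infty$). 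This is precisely the content of \cite[Lemma 5]{GR}, and I would simply invoke it.

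Next I would combine this with the lower bound on $t_\e$ from Lemma \ref{44}: there exists $C>0$ with $t_\e\ge C$ for all $\e>0$. Hence
\[
\int_\Omega |t_\e u_\e|^2\,dx = t_\e^2 \int_\Omega |u_\e|^2\,dx \ge C^2\,\widetilde\tau(\e)\,\e^{N-2}
\]
for $\e$ small, and setting $\tau(\e):=C^2\widetilde\tau(\e)$ gives a function with $\tau(\e)\to+\infty$ as $\e\to 0$, which is exactly the claim. If one prefers to work along the subsequence $(\e_n)$ of Lemma \ref{44} on which $t_{\e_n}\to 1$, one can even take $t_{\e_n}^2\ge 1/2$ for $n$ large and conclude identically; but since Lemma \ref{44} already provides the uniform lower bound $t_\e\ge C$, the estimate holds for the whole family.

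The only mildly delicate point — and the one I would present with care — is the dimension-dependent asymptotics of $|u_\e|_2^2$, since the power of $\e$ (and the appearance of $|\log\e|$ for $N=4$) differs across dimensions; but all of these are classical Brezis--Nirenberg computations already cited in the excerpt right after \eqref{uem}, and in every case the ratio $|u_\e|_2^2/\e^{N-2}$ diverges. So there is no genuine obstacle: the lemma is a quotation of \cite[Lemma 5]{GR} combined with the non-degeneracy of $t_\e$ established in Lemma \ref{44}.
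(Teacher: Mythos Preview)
Your overall strategy --- use the uniform lower bound $t_\e\ge C$ from Lemma~\ref{44} and then reduce to the asymptotics of $|u_\e|_2^2$ --- is exactly what is done in \cite{GR}, and the paper simply quotes that result without reproving it. So the route is the right one.

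However, the asymptotic expansions you invoke for $|u_\e|_2^2$ are wrong, because you have confused two different families called $u_\e$ in the paper. The formulas $|u_\e|_2^2\sim K_3\,\e^{-(N-4)/2}$ (for $N\ge 5$) and $|u_\e|_2^2\sim K_3|\log\e|$ (for $N=4$) pertain to the \emph{unnormalized} family $u_\e(x)=\varphi(|x|)/(\e+|x|^2)^{(N-2)/2}$ from Section~4.1. The function appearing in the present lemma is instead the \emph{bubble-normalized} $u_\e=\eta\,u_\e^*$ from \eqref{bubble}--\eqref{uem}, for which the scaling is completely different: a change of variables $x=\e y$ gives, for $N\ge 5$,
\[
|u_\e|_2^2 \;=\; \alpha_N^2\int_\Omega \frac{\e^{N-2}\,\eta^2(x)}{(\e^2+|x|^2)^{N-2}}\,dx \;\sim\; C\,\e^{2},
\]
and $|u_\e|_2^2\sim C\,\e^2|\log\e|$ for $N=4$ (compare the analogous computation in the proof of Lemma~\ref{lemma6}). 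In particular $|u_\e|_2^2\to 0$, not $+\infty$, as $\e\to 0$; your stated expansions would have it diverge. There are also no $L^2$-estimates ``cited right after \eqref{uem}'': only $\|u_\e\|^2$ and $|u_\e|_{2^*}^{2^*}$ appear there.

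Fortunately the corrected asymptotics still yield the lemma: combining $t_\e\ge C$ with $|u_\e|_2^2\ge C'\e^2$ gives
\[
\int_\Omega |t_\e u_\e|^2\,dx \;\ge\; C^2C'\,\e^2 \;=\; \big(C^2C'\,\e^{4-N}\big)\,\e^{N-2},
\]
and $\tau(\e):=C^2C'\,\e^{4-N}\to+\infty$ for $N\ge 5$ (with an extra $|\log\e|$ when $N=4$). So your argument is easily repaired, but as written the intermediate claims about $|u_\e|_2^2$ are false and should be replaced by the correct scaling.
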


Hence we can conclude the proof in this case.
\begin{proof}[Proof of (\ref{ii13}) of Theorem \ref{Th2} completed]
First we observe that, by \eqref{J}, \eqref{stimebn}, Lemma \ref{le:2}
\begin{align*}
I(t_\e u_\e)
&\le
\frac{N}{N+2}t_\e^2\| u_\e\|^2
-\frac{\lambda}{2}|t_\e^2 u_\e|_2^2
-\frac{N-2}{N+2}t_\e^{2^*}| u_\e|_{2^*}^{2^*}\\
&\le
\(\frac{N}{N+2}t_\e^2 -\frac{N-2}{N+2}t_\e^{2^*}\) S^{\frac N 2}
+C(1 - \tau(\e)) \e^{N-2}\\
&<
\frac{2}{N+2} S^{\frac N 2}
\end{align*}
for $\e$ small enough.
Then by \eqref{Isplit1} and \eqref{eq:Jweps}  we get
\[
I(v_\e)<\frac{2}{N+2}S^{\frac N 2}
\]
which contradicts \eqref{contr}.\\
Hence, the Linking Theorem can be applied using Lemma \ref{PS} and Lemma \ref{le:link}.
\end{proof}


\subsection{Sign changing solutions: the resonance case}
Let $\lambda=\lambda_k$ and  $\mu\in (\lambda_k, \lambda_{k+1})$.
As in the previous section we want to show that \eqref{aim} holds.\\
By contradiction assume that for all $m$ and all $\e>0$ there exists $v_\e^m=w^m_\e+t^{m}_\e u_\e^m \in Q_\e^m$ such that
\begin{equation}\label{aimcontr1}
I(v_\e^m)
\ge \frac{2}{N+2}S^{\frac N 2}
\end{equation}
where $u_\e^m$ is defined in \eqref{uem}.\\
Then, as in Lemma \ref{44}, but using \eqref{ahah} instead of \eqref{eq:Jweps},  the sequence $(t^{m}_\e)$
satisfies again
	\begin{equation}\label{eq:bound}
		t^m_\e \ge C>0
	\end{equation}
uniformly for $m$ large enough and $\e>0$.\\
As in \cite[Lemma 6]{GR} one can prove the following result.

\begin{lemma}\label{lemma6}
Let $m\to+\infty$ and assume that $\e=\e(m)=o(\frac 1 m)$; then $$\|u_\e^m\|^2=S^{\frac N 2}+ O[(\e m)^{N-2}],\qquad |u_\e^m|_{2^*}^{2^*}=S^{\frac N 2}+ O[(\e m)^N].$$ Moreover,
$$\int_\Omega |t_\e^m u_\e^m|^2\, dx\ge C \e^2.$$
\end{lemma}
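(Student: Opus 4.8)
The plan is to adapt the classical Brezis--Nirenberg estimates, in the form used by Gazzola--Ruf, observing that $u_\e^m=\eta u_\e^*$ coincides with the Aubin--Talenti bubble $u_\e^*$ of \eqref{bubble} on $B_{1/2m}$ and differs from it only on the shell $B_{1/m}\setminus B_{1/2m}$, on which $|x|\simeq 1/m$; since by hypothesis $\e=\e(m)=o(1/m)$, on that shell (and more generally on $\{|x|\ge 1/(2m)\}$) we have $\e\ll|x|$, hence $\e^2+|x|^2\simeq|x|^2$.

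First I would prove the two norm estimates. Expanding
\[
\|u_\e^m\|^2=\irn\eta^2|\n u_\e^*|^2\,dx+2\irn\eta\, u_\e^*\,\n\eta\cdot\n u_\e^*\,dx+\irn|u_\e^*|^2|\n\eta|^2\,dx
\]
and recalling $\irn|\n u_\e^*|^2\,dx=S^{N/2}$, the first term differs from $S^{N/2}$ by $-\irn(1-\eta^2)|\n u_\e^*|^2\,dx$, an integral over $\{|x|\ge 1/(2m)\}$ of a quantity bounded by $C\e^{N-2}|x|^{2-2N}$; integrating in polar coordinates (using $N>2$) gives $O(\e^{N-2}m^{N-2})=O((\e m)^{N-2})$. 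The third term is supported on $B_{1/m}\setminus B_{1/2m}$, where $|\n\eta|\le 4m$ and $|u_\e^*|^2\le C\e^{N-2}|x|^{4-2N}\le C\e^{N-2}m^{2N-4}$; since that shell has measure $O(m^{-N})$, this term is again $O((\e m)^{N-2})$. Finally, restricting to the shell and applying Cauchy--Schwarz, the cross term is bounded by a constant times the geometric mean of $\int_{B_{1/m}\setminus B_{1/2m}}|\n u_\e^*|^2\,dx=O((\e m)^{N-2})$ and the third term, hence is $O((\e m)^{N-2})$ too. This yields $\|u_\e^m\|^2=S^{N/2}+O((\e m)^{N-2})$. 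The estimate for $|u_\e^m|_{2^*}^{2^*}$ is entirely parallel: it differs from $\irn|u_\e^*|^{2^*}\,dx=S^{N/2}$ by $\irn(1-\eta^{2^*})|u_\e^*|^{2^*}\,dx$, an integral over $\{|x|\ge 1/(2m)\}$ of a quantity bounded by $C\e^N|x|^{-2N}$ (using $(N-2)\,2^*/2=N$), which integrates to $O(\e^N m^N)=O((\e m)^N)$.

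For the $L^2$ lower bound I would use \eqref{eq:bound}, i.e. $t_\e^m\ge C>0$ uniformly, to reduce the claim to $\int_\Omega|u_\e^m|^2\,dx\ge C\e^2$. Since $\eta\equiv 1$ on $B_{1/2m}$ we have $\int_\Omega|u_\e^m|^2\,dx\ge\int_{B_{1/2m}}|u_\e^*|^2\,dx$, and the change of variables $x=\e y$ turns the right-hand side into $\alpha_N^2\,\e^2\int_{B_{1/(2m\e)}}(1+|y|^2)^{2-N}\,dy$. Because $N\ge 6>4$, the function $(1+|y|^2)^{2-N}$ is integrable on $\RN$, and since $m\e\to 0$ the radius $1/(2m\e)\to+\infty$; hence for $m$ large the last integral exceeds $\frac12\irn(1+|y|^2)^{2-N}\,dy>0$, which gives $\int_\Omega|u_\e^m|^2\,dx\ge C\e^2$ and therefore $\int_\Omega|t_\e^m u_\e^m|^2\,dx\ge C\e^2$.

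The computations are routine and there is no deep obstacle; the point requiring care is the bookkeeping on the shell $B_{1/m}\setminus B_{1/2m}$, namely checking that the cross term and the $|\n\eta|^2$-term are genuinely $O((\e m)^{N-2})$ and not a worse power, which is exactly where the hypothesis $\e=o(1/m)$ enters, ensuring $\e^2+|x|^2\simeq|x|^2$ on $\{|x|\ge 1/(2m)\}$. One should also note the borderline role of the dimension in the lower bound: the restriction $N>4$ (a fortiori $N\ge 6$) is precisely what makes $(1+|y|^2)^{2-N}$ integrable, so that $\int_\Omega|u_\e^m|^2\,dx$ is comparable to $\e^2$ rather than to $\e^2$ times a factor diverging as $m\e\to 0$.
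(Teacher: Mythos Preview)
Your argument is correct. For the two norm estimates the paper simply cites \cite[Lemma 6]{GR}, and what you wrote is essentially the computation behind that reference, so there is nothing to compare there.

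For the $L^2$ lower bound you take a slightly different route from the paper. You integrate $|u_\e^*|^2$ over the whole ball $B_{1/2m}$, rescale $x=\e y$, and then use that $(1+|y|^2)^{2-N}\in L^1(\RN)$ when $N>4$ together with $1/(2m\e)\to+\infty$. The paper instead integrates only over the smaller ball $B_\e\subset B_{1/2m}$ and computes directly
\[
\int_{B_\e}|u_\e^*|^2\,dx
= C\int_0^\e \frac{\e^{N-2}}{(\e^2+r^2)^{N-2}}\,r^{N-1}\,dr
= C\e^2\int_0^1 \frac{s^{N-1}}{(1+s^2)^{N-2}}\,ds
= C\e^2,
\]
which needs no integrability at infinity and therefore no restriction on the dimension beyond $N\ge 3$. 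In the present context ($N\ge 6$) your version is perfectly fine; the paper's version is just a bit more elementary and shows that the lower bound $C\e^2$ is not where the dimensional restriction of Theorem~\ref{Th3} comes from.
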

	\begin{proof}
		We only prove the last estimate: for any $m$ large enough such that \eqref{eq:bound} holds and $\e$ small enough such that $B_\e\subset B_\frac{1}{2m}$, we have 
			\begin{align*}
				\int_\Omega |t_\e^m u_\e^m|^2\, dx&\ge C\int_0^{\e}\frac{\e^{N-2}}{(\e^2+r^2)^{N-2}}r^{N-1}\,dr = C\e^2.
			\end{align*}

	\end{proof}

Hence we can conclude as follows.
\begin{proof}[Proof of Theorem \ref{Th3} completed]
We choose $\e=\e(m)$ in order to deal only with the parameter $m$ and to have a contradiction to \eqref{aimcontr1} for $m$ large enough. We take
\begin{equation*}
\e(m)=m^{-\frac{N-2}{N-4}}\log^{-\frac{1}2}(m).
\end{equation*}
Therefore, as $m\to+\infty$ then $\e(m)=o(\frac 1 m)$ and Lemma \ref{lemma6} applies.\\
From now on, we express only the dependence on $m$.\\
By \eqref{J}, Lemma \ref{lemma6}, if $N\ge 5$ and $m$ is large enough we have
\begin{align*}
I(t_m u_m)
&\le
\frac{N}{N+2}t_m^2\| u_m\|^2
-\frac{\lambda_k}{2}|t_m^2 u_m|_2^2
-\frac{N-2}{N+2}t_m^{2^*}| u_m|_{2^*}^{2^*}\\
&\le
\(\frac{N}{N+2}t_m^2 -\frac{N-2}{N+2}t_m^{2^*}\) S^{\frac N 2}
- \big(C_1 - C_2\log^{-\frac{N-4}{2}}(m)\big) m^{-\frac{2(N-2)}{N-4}}\log^{-1}(m)
\\
&\le
\frac{2}{N+2} S^{\frac N 2}
- C m^{-\frac{2(N-2)}{N-4}}\log^{-1}(m).
\end{align*}
%
%
%
Then, using also \eqref{Isplit1} and \eqref{ahah}, we have
\begin{align*}
I(v^m) &\le I(t_m u^m)+I(w^m)\\
&\le \frac{2}{N+2}S^{\frac N 2}
- C_1 m^{-\frac{2(N-2)}{N-4}}\log^{-1}(m)
+C_2 m^{-\frac{N^2-4}{4}} \\
& =
\frac{2}{N+2}S^{\frac N 2}
- \Big(C_1 \log^{-1}(m) - C_2 m^{-\frac{(N-2)(N^2-2N-16)}{4(N-4)}}\Big)m^{-\frac{2(N-2)}{N-4}} 
\end{align*}
which, for $m$ large enough and $N\ge 6$, contradicts \eqref{aimcontr1}.\\
Hence, the Linking Theorem, through Lemma \ref{PS} and Lemma \ref{le:link}, allows us to conclude.
\end{proof}


\begin{thebibliography}{99}
	

\bibitem{AzzDav}
A. Azzollini, P. d'Avenia {\em On a system involving a critically growing nonlinearity}, J. Math. Anal. Appl. {\bf 387} (2012), 433--438.


\bibitem{BR}
V. Benci, P.H. Rabinowitz, {\em Critical point theorems for indefinite functionals}, Invent. Math. {\bf 52} (1979), 241--273.

\bibitem{BN}
H. Brezis, L. Nirenberg, {\em Positive solutions of nonlinear elliptic equations involving critical Sobolev exponents}, Comm. Pure Appl. Math. {\bf 36} (1983), 437--477.

\bibitem{CFP}
A. Capozzi, D. Fortunato, G. Palmieri, {\em An existence result for nonlinear elliptic problems involving critical Sobolev exponent}, Ann. Inst. H. Poincar\'e Anal. Non Lin\'eaire {\bf 6} (1985), 463--470.

\bibitem{DM04}
T. D'Aprile, D. Mugnai, {\em Solitary waves for nonlinear Klein-Gordon-Maxwell and Schr\"odinger-Maxwell equations}, Proc. Roy. Soc. Edinburgh Sect. A {\bf 134} (2004), 893--906.

\bibitem{Evans}
L.C. Evans,  {\em Partial differential equations}, Graduate Studies in Mathematics {\bf 19}, American Mathematical Society, Providence, RI, 2010.

\bibitem{GR}
F. Gazzola, B. Ruf, {\em  Lower-order perturbations of critical growth nonlinearities}, Adv. Differential Equations {\bf 2} (1997),  555--572.

\bibitem{GNN}
B. Gidas, W. M. Ni, L. Nirenberg, {\em Symmetry and related properties via the maximum principle}, Comm. Math. Phys. {\bf 68} (1979), 209--243.

\bibitem{GT}
D. Gilbarg, N.S. Trudinger, {\em Elliptic partial differential equations of second order}, Springer-Verlag, Berlin, 2001.

\bibitem{MVS}
V. Moroz, J. Van Schaftingen, {\em Groundstates of nonlinear Choquard equations: existence, qualitative properties and decay asymptotics}. J. Funct. Anal. {\bf 265} (2013), 153--184.

\bibitem{Rab}
P.H. Rabinowitz, {\em Minimax methods in critical point theory with applications to differential equations}, CBMS Regional Conference Series in Mathematics {\bf 65}, American Mathematical Society, Providence, RI, 1986.

\bibitem{Ruiz}
D. Ruiz, {\em On the Schr\"odinger-Poisson-Slater system: behavior of minimizers, radial and nonradial cases}, Arch. Ration. Mech. Anal. {\bf 198} (2010), 349--368.

\bibitem{W2013}
M. Willem, {\em Functional analysis. Fundamentals and applications}, Cornerstones, Birkh\"auser/Springer, New York, 2013.


\end{thebibliography}
\end{document}